\newtheorem{proposition}{Proposition}[section]
\newtheorem{theorem}{Theorem}[section]
\newtheorem{example}{Example}[section]
\newtheorem{remark}{Remark}[section]
\newtheorem{definition}{Definition}[section]
\newtheorem{corollary}{Corollary}[section]
\newtheorem{lemma}{Lemma}[section]
\newtheorem{conjecture}{Conjecture}[section]
\newenvironment{proof}{\textbf{Proof.}}{\qquad $\Box$ \bigskip }
\newcommand{\R}{\mathbb{R}}
\newcommand{\C}{\mathbb{C}}
\newcommand{\mc}{\mathcal}
\newcommand{\ii}{\mathrm{i}}
\newcommand{\F}{\mathcal{F}}
    \definecolor{helena}{rgb}{.2,.8,.4}
    \definecolor{steve}{rgb}{.8,.2,.2}
    \definecolor{todo}{rgb}{.2,.2,.8}
\begin{document}
\title{Stochastic Matrices Realising the Boundary of the Karpelevi\v c Region}

\author{Stephen Kirkland \fnref{fn1}}
	\ead{stephen.kirkland@umanitoba.ca}
\author{Helena \v Smigoc \corref{cor1}%
	\fnref{fn2}}
	\ead{helena.smigoc@ucd.ie}



\cortext[cor1]{Corresponding author}

\fntext[fn1]{Department of Mathematics, University of Manitoba, Winnipeg, MB, Canada.}

\fntext[fn2]{School of Mathematics and Statistics, University College Dublin, Belfield, Dublin 4, Ireland.} 

\begin{abstract} A celebrated result of Karpelevi\v c describes $\Theta_n,$ the collection of all eigenvalues arising from the stochastic matrices of order $n.$ The boundary of $\Theta_n$ consists of roots of certain one-parameter families of polynomials, and those polynomials are naturally associated with the so--called reduced Ito polynomials of Types 0, I, II and III. 
	
	In this paper we explicitly characterise all $n \times n$ stochastic matrices whose characteristic polynomials are of Type 0 or Type I, and all sparsest stochastic matrices of order $n$  whose characteristic polynomials are of Type II or Type III. The results provide insights into the structure of stochastic matrices having extreme eigenvalues. 

\begin{keyword}
Stochastic matrix; Eigenvalue; Markov chain; Karpelevi\v c Region \smallskip
  \MSC[2010] 15A18, 15B51, 60J10.\\
\end{keyword}
\end{abstract}

\maketitle 

\section{Introduction} 

A square matrix with nonnegative entries is {\emph{stochastic} if each of its row sums is equal to $1$. These matrices are a central object of study because of their fundamental importance to Markov chains, a widely applied class of stochastic processes. For instance, Markov chains have emerged as key tools for understanding phenomena arising in various domains, including web search \cite{PBMW}, economics \cite{MI}, and molecular conformation dynamics \cite{DHFS}. 
The eigenvalues of a stochastic matrix are critical in determining the convergence (or lack thereof) of the corresponding Markov chain, and consequently there is long--standing interest in localising the eigenvalues of stochastic matrices. 

 A classic result of Karpelevi\v c \cite{Kaenglish} describes $\Theta_n,$ the region in the complex plane consisting of all eigenvalues of all $n \times n$ stochastic matrices. That is, 
$$\Theta_n = \{ \lambda | \lambda \mbox{\rm{ is an eigenvalue of a stochastic matrix of order }} n\}.$$ 
The set $\Theta_n$ is readily seen to be star--shaped with respect to the origin, and consequently  Karpelevi\v c's description of $\Theta_n$  focuses on describing its boundary, $\partial \Theta_n$. That description is given in terms of certain one--parameter families of  polynomials; those polynomials, which are categorised into Types 0--III in \cite{JP}, are explored further in Section \ref{sec:2} below. We note in passing that a reparameterisation and sharpening of Karpelevi\v c's original  result can be found in  \cite{KLS}.  

 While the boundary of $\Theta_n$ is well--understood, little is known about the structure of $n \times n$ stochastic matrices having an eigenvalue on $\partial \Theta_n$. An early result of Dmitriev and Dynkin \cite{DD} provides some general information on such matrices, while Johnson and Paparella 
 \cite{JP} construct, for each $\lambda \in \partial \Theta_n,$   a stochastic matrix of order $n$ having $\lambda$ as an eigenvalue.   That paper also poses the question as to what extent the stochastic matrices with an eigenvalue on $\partial \Theta_n$ are unique.

In this paper we address the problem of explicitly describing the stochastic matrices furnishing an eigenvalue on $\partial \Theta_n$. Such matrices can be thought of as extreme in some sense, and as shown below,  
 their structure exhibits a deep connection between their combinatorial and algebraic properties.  In particular, we derive explicit descriptions of the stochastic matrices whose characteristic polynomials are of Type 0 or Type I (Sections \ref{sec:type0} and \ref{sec:type1}, respectively). We then go on to describe the sparsest stochastic matrices whose characteristic polynomials are of Type II or Type III (Sections \ref{sec:type2},  \ref{sec:type3}). We also formulate a conjecture on the Type III case. Taken together, our results sharpen the general structural result of \cite{DD}, and greatly extend the families of stochastic matrices constructed in \cite{JP}. Our results show that, save for the Type 0 case, the stochastic matrices having an eigenvalue on $\partial \Theta_n$ are highly non--unique. 
 
The structure of the paper is as follows. The next three subsections provide the necessary background to and preliminaries for the problem under consideration. Section \ref{sec:2} introduces and analyses the polynomials of Type 0--III, while as noted above, sections \ref{sec:type0}--\ref{sec:type3} contain our main results on Types 0 through III, respectively. Throughout we employ a mix of combinatorial and algebraic techniques, and  include examples to   illustrate the constructions.

\section{Background and Notation}

In order to state Karpelevi\v c's description of $\Theta_n$, we need the following definitions.

\begin{definition}
Given $n \in \mathbb{N}$, the set $$\F_n=\{\sfrac{p}{q}; 0 \leq p < q \leq n, \gcd(p,q)=1\}$$ is called \emph{the set of Farey fractions of order $n$}. 
\end{definition}

\begin{definition}
The pair $(\sfrac{p}{q},\sfrac{r}{s})$ is called \emph{a Farey pair (of order $n$)}, if $\sfrac{p}{q},\sfrac{r}{s} \in \mc F_n$, $\sfrac{p}{q} < \sfrac{r}{s}$, 
and $\sfrac{p}{q}< x <\sfrac{r}{s}$ implies $x \not \in \mc F_n$. Associated with each Farey pair is a parameter $d$, defined by $d\equiv \left \lfloor \frac{n}{q}\right \rfloor$. 

The Farey fractions $\sfrac{p}{q}$ and $\sfrac{r}{s}$  are called \emph{Farey neighbours}, if one of  $(\sfrac{p}{q},\sfrac{r}{s})$ and $(\sfrac{r}{s}, \sfrac{p}{q})$ is a Farey pair.  
\end{definition}

Here is the original description of $\Theta_n.$ 

\begin{theorem}(\cite{Kaenglish}, \cite{I})\label{thm:Karpelevic}
The region $\Theta_n$ is symmetric with respect to the real axis, is included in the unit disc $\{z \in \C \mid |z|\leq 1\},$ and intersects the unit circle  $\{z \in \C \mid |z|=1\}$ at the points $\{e^{ \frac{2 \pi \ii p}{q}} \mid \sfrac{p}{q} \in \F_n\}$.  The boundary of $\Theta_n$ consists of these points and of curvilinear arcs connecting them in circular order. 

Let the endpoints of an arc be $e^{2 \pi \ii p \over q}$ and $e^{2 \pi \ii r \over s}$ with $q<s$. Each of these arcs is given by the following parametric equation: 
 \begin{equation}\label{eq:Karpelevic Poly}
 t^s (t^q-\beta)^{\lfloor {n \over q} \rfloor}=\alpha^{\lfloor {n \over q} \rfloor}t^{q\lfloor {n \over q}\rfloor}, \, \alpha \in [0,1], \, \beta \equiv 1-\alpha.
 \end{equation}
\end{theorem}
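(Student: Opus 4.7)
The plan is to break the theorem into three assertions: (i) the general geometric properties of $\Theta_n$ (symmetry under conjugation, inclusion in the closed unit disc, and closedness); (ii) the identification of the peripheral points $\{e^{2\pi \ii p/q} : \sfrac{p}{q} \in \F_n\}$; and (iii) the parametric description of the boundary arcs by \eqref{eq:Karpelevic Poly}. Part (i) is routine: symmetry follows because the characteristic polynomial of a real matrix has real coefficients; inclusion in the closed unit disc is Perron--Frobenius applied to a row-stochastic matrix (row sums equal to $1$ gives spectral radius $1$); and closedness follows from continuity of eigenvalues together with compactness of the set of $n\times n$ row-stochastic matrices as a subset of $\R^{n^2}$.

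For part (ii), I would invoke the classical Perron--Frobenius theory of nonnegative matrices: any unimodular eigenvalue is a root of unity, and for a stochastic matrix of order $n$ the order of that root of unity is at most $n$ (since it must divide a cycle length in the directed graph of the matrix, which has $n$ vertices). This forces $q\le n$ with $\gcd(p,q)=1$, i.e. $\sfrac{p}{q}\in\F_n$. Conversely, each such root of unity $e^{2\pi \ii p/q}$ is realised by embedding a cyclic permutation matrix of size $q$ as a diagonal block inside an $n\times n$ stochastic matrix (padding with any stochastic block of order $n-q$ whose spectrum lies in the unit disc).

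For part (iii), the strategy is to exhibit, for each Farey pair, a one-parameter family of stochastic matrices $A(\alpha)$, $\alpha\in[0,1]$, whose characteristic polynomials reproduce \eqref{eq:Karpelevic Poly} and whose roots, as $\alpha$ varies, sweep out the arc from $e^{2\pi \ii p/q}$ to $e^{2\pi \ii r/s}$. A natural candidate is a directed graph on $n$ vertices built from a single cycle of length $s$ in which one designated vertex branches probabilistically: with probability $\alpha$ the transition takes a short chord completing a cycle of length $q$, and with probability $\beta=1-\alpha$ it continues along the long cycle of length $s$. Cofactor expansion of $tI-A(\alpha)$ along the branching vertex yields the stated polynomial, with the exponent $d=\lfloor n/q\rfloor$ reflecting how many copies of the short $q$-cycle can be packed into $n$ vertices. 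Endpoint checks ($\alpha=0$ and $\alpha=1$) recover the two roots of unity at the vertices of the arc.

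The hard part will be the converse in (iii): proving that no stochastic matrix of order $n$ produces an eigenvalue strictly outside the region bounded by these arcs. This is the deep content of Karpelevi\v c's theorem. The route I would take is to observe that the set of characteristic polynomials of $n\times n$ stochastic matrices is a compact, connected subset of polynomial space, and that any boundary point of $\Theta_n$ is attained by a matrix which is itself extremal in the stochastic polytope. A support-pattern analysis of such extremal matrices shows their digraphs must be highly restricted---essentially unions of cycles glued at a single branching vertex---forcing the characteristic polynomial into one of the Ito-type forms. Tying up the floor function $\lfloor n/q\rfloor$ correctly when $q\nmid n$, and verifying that the arcs so obtained meet in the prescribed circular order without overshooting, are the principal technical obstacles.
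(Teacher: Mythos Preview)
The paper does not prove this theorem at all: it is stated as background, with citations to Karpelevi\v{c}'s original paper \cite{Kaenglish} and Ito's reformulation \cite{I}, and is used without further justification. So there is no ``paper's own proof'' to compare against; your proposal is an attempt to supply what the authors deliberately take for granted.

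On the substance of your outline: parts (i) and (ii) are fine and standard. For part (iii), your sketch of the realising family is too coarse. A single branching vertex on one long cycle does not in general produce the polynomial \eqref{eq:Karpelevic Poly}; the exponent $\lfloor n/q\rfloor$ arises not from ``packing copies of the short $q$-cycle'' but from a more intricate construction that depends on which of the four regimes (Types 0--III in the paper's terminology) the Farey pair falls into. The paper's later sections make clear that the realising matrices look quite different in each case. More seriously, your plan for the converse --- reducing to extremal matrices in the stochastic polytope and arguing that their digraphs are ``unions of cycles glued at a single branching vertex'' --- is not correct as stated: the extremal stochastic matrices are the $(0,1)$ matrices, whose eigenvalues are all roots of unity, so they do not trace out the interior of the arcs. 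The actual argument (Dmitriev--Dynkin and then Karpelevi\v{c}) works instead with the eigenvector geometry, analysing the convex hull of the orbit of an eigenvector under rotation, and is substantially more delicate than a polytope-vertex reduction. If you want to write a self-contained proof, you should consult \cite{DD} and \cite{Kaenglish} directly rather than attempt to reconstruct it from the polynomial side.
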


\begin{remark} As noted above, we are interested in describing the $n \times n$ stochastic  matrices that realise an eigenvalue  $\lambda \in \partial \Theta_n.$ For a Farey pair of order $n$ $(\sfrac{p}{q},\sfrac{r}{s})$ with $q < s,$ it is possible that $s<n.$ When that is the case then in fact $\lambda \in \Theta_{n-1}$. For this reason we  restrict our attention to the case $s=n$ henceforth, as that is the setting of greatest mathematical interest. 
\end{remark}

\begin{remark}
	Suppose that we are given  a Farey pair of order $n$ $(\sfrac{p}{q},\sfrac{r}{s})$ with $q < s=n.$ Inspecting \eqref{eq:Karpelevic Poly} of  Theorem \ref{thm:Karpelevic}, we see that the corresponding polynomial has one or more roots equal to zero, and since $0 \notin \partial \Theta_n$ when $n\ge 3,$ those zero roots are extraneous to our discussion. 	
	In  simplifying the polynomials by removing those extraneous zero roots, the following cases present themselves; here we follow the terminology introduced in \cite{JP}.
	\begin{itemize}
		\item $q=1$: This leads to the Type 0 polynomials described in Section \ref{sec:type0}; 
		\item $\lfloor {n \over q} \rfloor =1$: This leads to the Type I polynomials described in Section \ref{sec:type1};
		\item $\lfloor {n \over q} \rfloor \ge 2, n<q\lfloor {n \over q} \rfloor$: This leads to the Type II polynomials described in Section \ref{sec:type2} ;  
		\item $\lfloor {n \over q} \rfloor \ge 2, n > q\lfloor {n \over q} \rfloor$: This leads to the Type III polynomials described in Section \ref{sec:type3}. 	\end{itemize}
	Collectively, these polynomials are known as the \emph{reduced Ito polynomials}~\cite{I}. 
	\end{remark}

In our study of stochastic realisations of reduced Ito polynomials we will look at the corresponding weighted directed graphs. Here we review basic definitions and background results that we will need in our analysis. 

\begin{definition}
\emph{A weighted digraph} $\Gamma=(V(\Gamma) , E(\Gamma), w)$ is defined by its set of vertices  $V(\Gamma)=\{1,\ldots,n\}$, set of edges $E(\Gamma) \subset V(\Gamma) \times V(\Gamma)$, and a positive real map $w: E(\Gamma)\rightarrow \R_+$ that defines the weights on $E(\Gamma)$. 
\end{definition}

\begin{definition}
{\emph The adjacency matrix of a weighted digraph} $\Gamma$ with $|V(\Gamma)|=n$ is the $n \times n$ matrix
$A =(a_{ij})$, where where $a_{ij} = w(i,j)$ if $(i, j )\in E(\Gamma)$, and $a_{ij} = 0$ otherwise.
\end{definition}

We follow standard definitions associated with digraphs for notions such as a walk, a path and a cycle. We call a collection of vertex disjoint cycles \emph{a linear digraph}. 
The following theorem explains how the coefficients of the characteristic polynomial of a matrix can be understood from the corresponding digraph. It follows immediately from the Coates formula for the determinant of a square matrix \cite{C}. 

\begin{theorem}\label{thm:coefficients}
Let $\Gamma=(V(\Gamma) , E(\Gamma), w)$ be a weighted digraph on $n$ vertices, $A$ its adjacency
matrix, and $$\det(x I_n -A) = x^n + k_1x^{n-1} + k_2 x^{n-2} +\ldots + k_n.$$
Let $\mc L_i$ denote the set of all linear digraphs $\mc L$ in $\Gamma$, with $|V(\mc L)|=i$. Let $c(\mc L)$ denote the number of cycles in $\mc L$, and $\pi(\mc L):=\prod_{(i,j) \in E(\mc L)} w(i,j)$. 
Then, for each $i \in \{1,\ldots,n\}$, we have:
\begin{equation}\label{eq:ki}
k_i =\sum_{\mc L \in \mc L_i} (-1)^{c(\mc L)}\pi(\mc L).
\end{equation}
\end{theorem}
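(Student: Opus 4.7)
The plan is to derive the formula directly from the Leibniz expansion of the characteristic polynomial, reinterpreting the resulting sum over permutations as a sum over linear subdigraphs. First I would invoke the standard identity
$$k_i = (-1)^i \sum_{S \subseteq V(\Gamma),\, |S|=i} \det(A[S]),$$
where $A[S]$ denotes the principal submatrix of $A$ indexed by $S$. This is obtained by expanding $\det(xI_n-A)$ as a polynomial in $x$ and collecting the coefficient of $x^{n-i}$; it is a basic fact about characteristic polynomials and requires no reference to the graph structure.

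Next, for each fixed subset $S$ of size $i$, I would expand $\det(A[S])$ via the Leibniz formula, obtaining a signed sum over permutations $\sigma \in \mathrm{Sym}(S)$ of $\prod_{j \in S} a_{j,\sigma(j)}$. The key observation is that this product is nonzero precisely when each pair $(j,\sigma(j))$ is an edge of $\Gamma$, in which case the cycle decomposition of $\sigma$ on $S$ corresponds to a linear subdigraph $\mc L$ with $V(\mc L)=S$, and the product evaluates to $\pi(\mc L)$. Letting $S$ vary over all $i$-element subsets of $V(\Gamma)$ yields a bijection between the nonzero-contributing permutations and the family $\mc L_i$, allowing the double sum to be rewritten as a single sum indexed by $\mc L \in \mc L_i$.

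The final piece is the sign computation, which is also the step most prone to error. A permutation on an $i$-element set whose cycle decomposition consists of $k$ cycles of lengths $\ell_1, \ldots, \ell_k$ (with $\sum_t \ell_t = i$) has sign $\prod_{t=1}^{k}(-1)^{\ell_t -1} = (-1)^{i-k}$, and $k$ equals $c(\mc L)$ for the corresponding linear subdigraph. Multiplying by the prefactor $(-1)^i$ from the first step produces $(-1)^{2i - c(\mc L)} = (-1)^{c(\mc L)}$, which is exactly the sign appearing in \eqref{eq:ki}. Beyond this parity bookkeeping, the argument is a routine rearrangement of standard determinant identities, so I do not anticipate any further obstacles.
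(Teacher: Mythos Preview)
Your argument is correct. The paper does not actually give its own proof of this theorem; it simply remarks that the statement ``follows immediately from the Coates formula for the determinant of a square matrix'' and cites \cite{C}. What you have written is precisely the standard derivation underlying that formula: expand $\det(A[S])$ by Leibniz, identify nonvanishing terms with linear subdigraphs on $S$, and track the sign $(-1)^i\cdot(-1)^{i-c(\mc L)}=(-1)^{c(\mc L)}$. So your approach is not different from the paper's in any substantive way---you have just unpacked the cited result rather than invoking it.
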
 

In addition to notation already established in this section, we will denote the  $n \times n$ identity matrix by $I_n$ and the $n \times n$ permutation matrix with $1$s in positions  $(i, (i+1)_n), i=1, \ldots, n$, by $C_n$. Furthermore, for $k, n \in \mathbb{N}$ we will use the notation $(k)_n:=k \mod n$, and for  $i,j \in \{1,\ldots,n\}$,  $d_n(i,j):=\min\{(i-j)_n,(j-i)_n\}$ for \emph{the distance between $i$ and $j$ modulo $n$}. Finally, we adopt the convention that $\alpha, \beta \in [0,1], $ with $\alpha+\beta =1$.

\section{Types of Reduced Ito Polynomials}\label{sec:2} 

For any reduced Ito polynomial $f_{\alpha}(t)$ there exists a stochastic matrix $A$ with the characteristic polynomial $f_{\alpha}(t)$, \cite{JP}. Given a reduced Ito polynomial  $f_{\alpha}(t)$ of degree $n$ corresponding to a Farey pair in $\mc F_n$, we aim to better understand the family of stochastic matrices with the characteristic polynomial $f_{\alpha}(t)$.   
Such polynomials occur in one of the following situations
\begin{itemize}
\item Type 0: $ f_{\alpha}(t)=(t-\beta)^n-\alpha^n $, $n=s$, $d=n$, $q=1$,
\item Type I: $f_{\alpha}(t)=t^n-\beta t^{n-q}-\alpha$, $n=s$, $d=1$, $q>\frac{n}{2}$. 
\item Type II: $f_{\alpha}(t)=(t^q-\beta)^{d}-\alpha^{d}t^z$, $n=q d$, $d \geq 2$, $s=qd-z$, $z \in \{1,\ldots, q-1\}$.  
\item Type III: $f_{\alpha}(t)=t^{y}(t^q-\beta)^{d}-\alpha^d$, $n=s=q d+y$,  $y \in \{1,\ldots, q-1\}$, $d \geq 2$. 
\end{itemize}

Clearly, for any weighted digraph $\Gamma$ corresponding to a stochastic matrix and each vertex $v \in V(\Gamma)$, the sum of weights of all outgoing edges from $v$ is equal to $1$. The following result of Dimitriev and Dynkin \cite{DD} imposes further restrictions on a digraph associated with a stochastic matrix realising any of the reduced Ito polynomials listed above.

\begin{theorem}\cite{DD}\label{thm:dd}
Let $\lambda \in \Theta_n \setminus \Theta_{n-1}$ with $\frac{2 \pi k}{n} \leq \arg{\lambda} \leq \frac{2 \pi (k+1)}{n}$. Any $n \times n$ stochastic matrix with an eigenvalue $\lambda$ is permutationally similar to a matrix $A=(a_{i \, j})$ with $a_{i\, j} \neq 0$ only for $j=1+(i+k-1)_n$ or $j=1+(i+k)_n$.  
\end{theorem}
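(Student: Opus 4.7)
The plan is to combine a reduction to the irreducible case with a first-order perturbation argument that exploits the extremality of $\lambda$ on $\partial\Theta_n$. If $A$ were reducible, then $\lambda$ would appear as an eigenvalue of some diagonal block in its Frobenius normal form, which is a substochastic matrix of order at most $n-1$; a standard row-stochastic completion would then produce a stochastic matrix of order $n-1$ still having $\lambda$ as an eigenvalue, contradicting $\lambda \notin \Theta_{n-1}$. So I may assume $A$ is irreducible with right eigenvector $v$ and left eigenvector $u$ for $\lambda$, and by a continuity argument, that $\lambda$ is an algebraically simple eigenvalue of $A$.

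For any real matrix $B$ with zero row sums such that $A + tB$ remains entrywise nonnegative for small $t > 0$, the eigenvalue continuation satisfies $\lambda'(0) = (u^* B v)/(u^* v)$. Since $\lambda(t) \in \Theta_n$, the derivative $\lambda'(0)$ must lie in the tangent cone of $\Theta_n$ at $\lambda$: a closed half-plane bounded by the tangent to the Karpelevi\v c arc \eqref{eq:Karpelevic Poly} at $\lambda$. Specialising $B$ to elementary row-$i$ rebalancings, $B_{ij} = +1$ and $B_{ij'} = -1$ for $a_{ij} = 0$ and $a_{ij'} > 0$, gives the constraint that $u_i(v_j - v_{j'})/u^* v$ lies in this half-plane. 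Running through all zero/nonzero pairs in each row, these constraints collectively force $v_1, \ldots, v_n$ to take arguments in precisely the $n$-th roots of unity (after a global rotation), with $\arg(v_j) - \arg(v_i) \in \{2\pi k/n,\, 2\pi(k+1)/n\} \pmod{2\pi}$ for every edge $(i,j) \in E(\Gamma)$. Relabelling vertices in the cyclic order of $\arg(v_i)$ then translates into the stated two-column support of each row.

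The main obstacle is the aggregation step: showing that the many tangent-cone inequalities pin down the eigenvector arguments to exactly $n$ equispaced values and the row supports to just two specific columns. The delicate point is ruling out a row with three or more nonzero entries whose corresponding $v_j$ happen to combine so that $\lambda v_i = \sum_j a_{ij} v_j$ still holds; the tangent-cone constraint must be strong enough to forbid this. I would bring in the Coates formula (Theorem \ref{thm:coefficients}) as a complementary tool, matching the characteristic polynomial of $A$ against the Karpelevi\v c polynomial \eqref{eq:Karpelevic Poly} so that any extraneous edge, which would contribute an unwanted linear-digraph term to some coefficient, is excluded by nonnegativity. This combinatorial-algebraic interplay is what I expect to be the hardest part of the argument.
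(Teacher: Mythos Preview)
The paper does not prove this theorem at all: Theorem~\ref{thm:dd} is stated with the citation \cite{DD} and used thereafter as a black box. There is no proof in the paper to compare your proposal against.

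On the proposal itself: the outline is in the spirit of the original Dmitriev--Dynkin argument, which is also geometric (each coordinate of $\lambda v$ is a convex combination of the coordinates of $v$, and one studies the polygon spanned by the $v_i$), but what you have written is a plan with an acknowledged gap rather than a proof. The decisive step---your ``aggregation step,'' forcing the arguments of the $v_i$ to be exactly the $n$ equispaced values and each row to be supported on only two columns---is precisely the content of the theorem, and you have not indicated how the half-plane constraints from the elementary perturbations $B$ actually combine to yield it. Two further issues would need attention before the sketch could be carried out: first, the tangent-cone argument presupposes that $\lambda$ lies on a smooth arc of $\partial\Theta_n$ with a well-defined tangent line, whereas the hypothesis $\lambda\in\Theta_n\setminus\Theta_{n-1}$ does not by itself place $\lambda$ on the boundary, nor away from the arc endpoints $e^{2\pi\ii p/q}$; second, the reduction to a simple eigenvalue ``by a continuity argument'' is not justified---you would need to explain why a non-simple $\lambda$ can be perturbed within the class of stochastic matrices to a nearby simple eigenvalue still in $\Theta_n\setminus\Theta_{n-1}$, or else handle the non-simple case directly. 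The appeal to Theorem~\ref{thm:coefficients} at the end is circular here, since matching against the Karpelevi\v c polynomial already presumes one knows which polynomial $\lambda$ satisfies, and that identification in the paper rests on Theorem~\ref{thm:dd}.
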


The polynomials $f_{\alpha}$ have several coefficients equal to zero. This restricts the structure of cycles in the corresponding digraph in the following way. 

\begin{proposition}\label{prop:cycle_structure}
Let  $\lambda \in \Theta_n \setminus \Theta_{n-1}$ be a root of reduced Ito polynomial $f_{\alpha}(t)$ of degree $n$,  and $\Gamma$ a weighted digraph whose adjacency matrix is a stochastic matrix with the characteristic polynomial is $f_{\alpha}(t)$. Then $\Gamma$ contains at least one $s$-cycle, at least one $q$-cycle, and no cycles of length different than $s$ and $k q$, $k=1,\ldots, d$. 
\end{proposition}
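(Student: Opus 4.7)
The approach combines Coates' formula (Theorem \ref{thm:coefficients}) with the sparse monomial support of $f_\alpha(t)$ to constrain which cycles can appear in $\Gamma$. Because the adjacency matrix is nonnegative, every $\pi(\mc L)$ is nonnegative, so cancellations in the signed sum expressing $k_i$ are strongly limited.

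First I would expand $f_\alpha(t)$ in each case to read off the set $S\subseteq\{1,\dots,n\}$ of indices with $k_i\ne 0$. A direct computation gives $S=\{q,n\}$ (Type I), $S=\{s,q,2q,\dots,dq\}$ (Type II), and $S=\{n,q,2q,\dots,dq\}$ (Type III), matching the target set $T=\{s\}\cup\{kq:1\le k\le d\}$ of admissible cycle lengths. The central combinatorial observation is that any vertex-disjoint union of cycles whose lengths all lie in $T$ must itself have total size in $T$. Indeed, the relation $n-s<q$ (which holds in every type) means an $s$-cycle cannot be combined with any further allowed cycle, so a union containing an $s$-cycle has size exactly $s$; a union free of $s$-cycles consists entirely of $q$-multiples and has size a positive multiple of $q$ at most $dq$.

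To prove the cycle-length restriction, I would let $\ell$ denote the smallest length of a cycle in $\Gamma$ that is not in $T$. A linear digraph in $\mc L_\ell$ is either a single $\ell$-cycle (contributing $-\pi(C)<0$) or a vertex-disjoint union of cycles each strictly shorter than $\ell$. By the minimality of $\ell$, cycles in the second case all have length in $T$, and by the combinatorial observation their total size lies in $T$ and is therefore different from $\ell$. Hence only single $\ell$-cycles contribute to $k_\ell$, yielding $k_\ell<0$ and contradicting $\ell\notin S$.

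Once the restriction is in place, existence of a $q$-cycle follows from $k_q=-d\beta$ (or $-\beta$ in Type I), since size-$q$ linear digraphs must be single $q$-cycles; existence of an $s$-cycle follows from $k_s=-\alpha^d$ (or $-\alpha$) together with the combinatorial observation applied at size $s$. In the Type 0 case ($q=1$, $d=s=n$) the third assertion is vacuous, and a loop exists because $k_1=-n\beta=-\mathrm{tr}(A)$; the existence of an $n$-cycle requires Theorem \ref{thm:dd}, which pins $A$ down (up to permutation similarity) to the form $\beta I+\alpha C_n$, whose only non-loop cycle is Hamiltonian with weight $\alpha^n$. The main obstacle is verifying the combinatorial observation of the second paragraph across all subcases and handling the Type 0 $n$-cycle separately, since that case lies outside the uniform Coates-formula mechanism.
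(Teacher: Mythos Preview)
Your approach coincides with the paper's: both invoke Theorem \ref{thm:coefficients} together with the vanishing pattern of the coefficients of $f_\alpha$ to constrain the cycle lengths, using nonnegativity of the weights to prevent cancellation. The paper's argument is considerably terser --- after observing that $k_i=0$ for $i<q$ forces the shortest cycle to have length at least $q$, it asserts the remaining restriction by appealing to the inequality $n-s\le q-1$ without spelling out an inductive mechanism. Your minimality argument on $\ell$ makes that step explicit (and your ``combinatorial observation'' that disjoint unions of cycles with lengths in $T$ again have total size in $T$ is exactly what is needed to justify the paper's jump). You also address the existence of a $q$-cycle and an $s$-cycle, which the paper's proof leaves unstated.

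One small gap in your Type~0 handling: Theorem \ref{thm:dd} by itself only yields $A=D+(I_n-D)C_n$ for some diagonal $D$ with entries in $[0,1]$; it does not pin $A$ down to $\beta I_n+\alpha C_n$. To deduce that an $n$-cycle is present you must still rule out $d_i=1$ for some $i$. This follows once $D=\beta I_n$ is known, which the paper obtains via Corollary \ref{cor:q_cycles}; since that corollary relies only on the ``no cycles of length $<q$'' part of the present proposition (vacuous when $q=1$), invoking it here is not circular. Alternatively one can argue directly that $d_i=1$ makes $A$ reducible in a way incompatible with $\lambda\in\Theta_n\setminus\Theta_{n-1}$, but that requires a separate short argument.
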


\begin{proof}
First we note that $\Gamma$ has no cycles of order less than $q$, since the coefficients of $t^{n-i}$, $i=1,\ldots,q-1$, of $f_{\alpha}$ are all equal to zero.
Furthermore, for $k>n-s$  the coefficient of $t^{k}$ is nonzero only if $k$ is divisible by $q$. Since $n-s \leq q-1$, Theorem \ref{thm:coefficients} implies that lengths of cycles in $\Gamma$ are either divisible by $q$ or equal to $s$.  
\end{proof}

In addition to their zero-nonzero pattern,  we will depend on the values of the coefficients of $f_{\alpha}$, to study their stochastic realisations. In particular, we note the following equality between the coefficients.  

\begin{lemma}\label{lem:ItoEquality1}
Let $f_{\alpha}(t)$ be a reduced Ito polynomial of degree $n$ associated with a Farey pair $(\frac{p}{q},\frac{r}{s}) \in \F_n$, $d=\lfloor \frac{n}{q}\rfloor$, and let $k_j$ denote the coefficient of $t^j$. Then $2 d k_{2 q}=(d-1)k_q^2.$
\end{lemma}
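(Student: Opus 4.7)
My plan is to verify the identity by a direct case analysis on the four types of reduced Ito polynomials. Throughout I read $k_j$ as the coefficient of $t^{n-j}$ in $f_\alpha(t)$, matching the indexing convention of Theorem~\ref{thm:coefficients}.

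The central observation is that in Types~0, II, and~III the polynomial $f_\alpha$ can be written in the unified form
\[
f_\alpha(t) \;=\; t^{a}\,(t^q-\beta)^d \;+\; (\text{one extra monomial}),
\]
where $a=0$ for Types~0 and~II, $a=y$ for Type~III, and the extra monomial is respectively $-\alpha^n$, $-\alpha^d t^z$ with $z\in\{1,\ldots,q-1\}$, or $-\alpha^d$. My first step is a short degree check showing that this extra monomial has degree different from both $n-q$ and $n-2q$, so that it affects neither $k_q$ nor $k_{2q}$. This reduces to $z\notin\{n-q, n-2q\}$ in Type~II (using $1\le z\le q-1$ together with $d\ge 2$, so that $n-2q\in\{0\}\cup[q,\infty)$), to $0<n-2$ in Type~0 (using $q=1$ and $n\ge 3$), and to $0<n-2q=q(d-2)+y$ in Type~III (using $y\ge 1$). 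Once this is in place, $k_q$ and $k_{2q}$ come solely from the binomial expansion of $t^a(t^q-\beta)^d$: only the $k=d-1$ and $k=d-2$ summands of $\sum_k\binom{d}{k}t^{a+qk}(-\beta)^{d-k}$ produce the monomials $t^{n-q}$ and $t^{n-2q}$, giving
\[
k_q \;=\; -d\beta, \qquad k_{2q} \;=\; \binom{d}{2}\beta^2.
\]
The claimed identity then collapses to the algebraic tautology $2d\binom{d}{2}=d^2(d-1)=(d-1)(-d)^2$.

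Type~I is a degenerate case handled separately in one line. There $d=1$, so the right-hand side $(d-1)k_q^2$ is zero; and $q>n/2$ forces $n-2q<0$, so there is no $t^{n-2q}$ term in $f_\alpha$ and the left-hand side is zero as well.

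The proof is essentially computational, and I expect no deeper obstacle. The only real "work" is the degree bookkeeping that confirms the extra monomial in each of Types~0, II, III does not contribute to $k_q$ or $k_{2q}$; once that is checked, the lemma is a single identity among binomial coefficients dressed up in the language of reduced Ito polynomials.
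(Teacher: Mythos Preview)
Your proof is correct and follows the same approach as the paper: both simply compute $k_q=-d\beta$ and $k_{2q}=\binom{d}{2}\beta^2$ and verify the identity directly. Your write-up is more detailed (the explicit degree bookkeeping ruling out the extra monomial, and the separate Type~I check), but the underlying idea is identical; you were also right to read $k_j$ as the coefficient of $t^{n-j}$, which is the convention actually used in the paper's own proof and in Corollary~\ref{cor:q_cycles}, despite the phrasing in the lemma's statement.
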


\begin{proof}
 The equality can easily be verified by computing $k_{q}$ and $k_{2q}$ of reduced Ito polynomials. Indeed, we have $k_q=-\beta d$ and $k_{2q}=\frac{1}{2}d(d-1) \beta^2$.
\end{proof}

The following inequality is a straightforward variation of the arithmetic-geometric mean inequality. 

\begin{lemma}\label{lem:iq}
Let $\alpha_i \geq 0$, $i=1,\ldots,t$. Then 
\begin{equation}\label{eq:alphas}
2 \sum_{i \neq j} \alpha_i \alpha_j \leq (t-1) \sum_{i=1}^n \alpha_i^2,
\end{equation}
with equality if and only if $\alpha_1=\alpha_2= \ldots=\alpha_t$.
\end{lemma}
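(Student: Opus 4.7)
The plan is to reduce the inequality to a sum-of-squares identity, which simultaneously yields both the bound and the equality case. Specifically, I would expand
\begin{equation*}
\sum_{1 \le i < j \le t} (\alpha_i - \alpha_j)^2,
\end{equation*}
observing that each $\alpha_k^2$ occurs in exactly $t-1$ of the summands (once for each index $j \neq k$), while the cross terms contribute $-2\alpha_i\alpha_j$ for each unordered pair. This gives the identity
\begin{equation*}
\sum_{1 \le i < j \le t} (\alpha_i - \alpha_j)^2 = (t-1)\sum_{k=1}^{t} \alpha_k^2 \;-\; 2\sum_{1 \le i < j \le t} \alpha_i \alpha_j.
\end{equation*}

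Interpreting the notation $\sum_{i \ne j}\alpha_i \alpha_j$ in the statement as the sum over unordered pairs $i<j$, the inequality \eqref{eq:alphas} follows at once from the nonnegativity of the left-hand side above. Moreover, the sum of squares vanishes precisely when $\alpha_i = \alpha_j$ for every pair $i<j$, i.e.\ when $\alpha_1 = \alpha_2 = \cdots = \alpha_t$, which handles the equality characterisation.

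An equivalent route would be to invoke Cauchy--Schwarz in the form $\bigl(\sum_{i=1}^t \alpha_i\bigr)^2 \le t \sum_{i=1}^t \alpha_i^2$ and then use the expansion $\bigl(\sum_i \alpha_i\bigr)^2 = \sum_i \alpha_i^2 + 2 \sum_{i<j} \alpha_i \alpha_j$; the equality clause of Cauchy--Schwarz (applied to the vectors $(\alpha_1,\ldots,\alpha_t)$ and $(1,\ldots,1)$) again forces all $\alpha_i$ to coincide. Either approach is routine, and I do not anticipate any genuine obstacle beyond fixing the convention for the double sum; the slight asymmetry of the factor $2$ on the left-hand side is precisely accounted for by the conversion between ordered and unordered pair summations.
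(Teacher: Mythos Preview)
Your argument is correct. The paper itself does not give a proof of this lemma; it simply remarks that the inequality is ``a straightforward variation of the arithmetic--geometric mean inequality'' and moves on. Your sum-of-squares identity
\[
\sum_{1\le i<j\le t}(\alpha_i-\alpha_j)^2=(t-1)\sum_{k=1}^t\alpha_k^2-2\sum_{1\le i<j\le t}\alpha_i\alpha_j
\]
is the cleanest way to see both the inequality and the equality case, and your reading of $\sum_{i\neq j}$ as a sum over unordered pairs is the one consistent with how the lemma is invoked in the paper (e.g.\ in the expansion $k_q^2=\sum_i w_i^2+2\sum_{i\neq j}w_iw_j$ in the proof of Corollary~\ref{cor:q_cycles}). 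Your Cauchy--Schwarz alternative is equally valid and arguably closer in spirit to the paper's one-line justification. Nothing further is needed.
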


\begin{corollary}\label{cor:q_cycles}
Let $\Gamma$ be the digraph of a stochastic matrix whose characteristic polynomial is equal to a reduced Ito polynomial of degree $n$ associated with a Farey pair $(\frac{p}{q},\frac{r}{s}) \in \F_n$, $d=\lfloor \frac{n}{q}\rfloor$. Then $\Gamma$ contains least $d$ $q$-cycles, and if it contains precisely $d$ $q$-cycles, then they are disjoint and of equal weight. 
\end{corollary}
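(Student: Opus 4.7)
The plan is to compare the coefficients $k_q$ and $k_{2q}$ of the characteristic polynomial, computed from $\Gamma$ via the Coates formula (Theorem \ref{thm:coefficients}), against the values dictated by Lemma \ref{lem:ItoEquality1}. That comparison yields a quadratic constraint on the weights of the $q$-cycles which then forces $t\ge d$ via the inequality in Lemma \ref{lem:iq}.

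First, by Proposition \ref{prop:cycle_structure}, $\Gamma$ contains no cycle of length strictly less than $q$, so the only linear digraphs on $q$ vertices are single $q$-cycles. Writing $\alpha_1,\ldots,\alpha_t$ for their weights, Theorem \ref{thm:coefficients} gives $k_q=-\sum_{i=1}^t\alpha_i$; since $k_q=-\beta d$, this yields $\sum_{i=1}^t\alpha_i=\beta d$.

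Next I would inventory the linear digraphs on $2q$ vertices. By Proposition \ref{prop:cycle_structure} every cycle in such a digraph has length $s$ or a positive multiple of $q$, and a short case analysis across the four Ito types shows that an $s$-cycle cannot participate (in Types 0 and III, and Type II with $d\ge 3$, one has $s>2q$; in Type II with $d=2$ one has $q<s\le 2q-1$ but no admissible cycle of length $2q-s$ exists; in Type I there is no $2q$-cycle at all). Consequently only single $2q$-cycles and vertex-disjoint pairs of $q$-cycles contribute to $k_{2q}$, and since $2q$-cycles enter with a negative sign,
\[
k_{2q}\;\le\;\sum_{\{C_i,C_j\}\text{ disjoint}}\alpha_i\alpha_j\;\le\;\sum_{i<j}\alpha_i\alpha_j\;=\;\frac{\beta^2 d^2-\sum_i\alpha_i^2}{2}.
\]
Substituting $k_{2q}=\tfrac{d-1}{2}\beta^2 d$ from Lemma \ref{lem:ItoEquality1} gives $\sum_i\alpha_i^2\le\beta^2 d$, and then Lemma \ref{lem:iq} in its equivalent form $(\sum_i\alpha_i)^2\le t\sum_i\alpha_i^2$ produces $\beta^2 d^2\le t\beta^2 d$, i.e.\ $t\ge d$.

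For the equality case $t=d$ I would trace each inequality back: the equality clause of Lemma \ref{lem:iq} forces $\alpha_1=\cdots=\alpha_d$, which combined with $\sum_i\alpha_i=\beta d$ gives $\alpha_i=\beta$ for every $i$; equality in the displayed chain forces both the total $2q$-cycle weight to vanish and the disjoint-pair sum to coincide with the sum over all pairs, so the $d$ cycles are pairwise vertex-disjoint. The main obstacle I anticipate is the combinatorial case analysis excluding $s$-cycle contributions at the $k_{2q}$ step, particularly the borderline $d=2$ situation in Type II; the remaining arguments are routine manipulations of the Coates formula combined with an AM--GM-type inequality.
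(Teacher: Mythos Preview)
Your proposal is correct and follows essentially the same route as the paper: compute $k_q$ and bound $k_{2q}$ via the Coates formula, then combine Lemma~\ref{lem:ItoEquality1} with Lemma~\ref{lem:iq} (equivalently, the Cauchy--Schwarz form you invoke) to force $t\ge d$ and read off the equality case. The $s$-cycle case analysis you flag as the main obstacle is in fact routine (and the paper omits it entirely, simply asserting $\sum_{i\ne j}w_iw_j\ge k_{2q}$ with its equality condition), while your passage through $\sum_i\alpha_i^2\le\beta^2 d$ is a cosmetic rearrangement of the paper's inequality chain.
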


\begin{proof} The conclusion is immediate if $d=1,$ so henceforth we assume that $d \ge 2.$ 
 Let $w_1, w_2, \ldots, w_t$ be the weights of $q$-cycles in $\Gamma$. Since, by Proposition \ref{prop:cycle_structure}, $\Gamma$ contains no cycles of order less than $q$, Theorem \ref{thm:coefficients} applied the coefficient of $t^{n-q}$ of $f_{\alpha}$ gives us  $k_q=-\sum_{i=1}^t w_i$, and hence 
 \begin{equation}\label{eq:square}
 k_q^2=\sum_{i=1}^t w_i^2+ 2\sum_{i \neq j}w_iw_j.
 \end{equation} 
 The same theorem applied to the coefficient of $t^{n-2q}$ implies 
 \begin{equation}\label{eq:ineq}
 \sum_{i \neq j}w_iw_j \geq k_{2q},
 \end{equation} with equality if and only if all $q$-cycles are vertex disjoint and there are no $2q$-cycles in $\Gamma$. Using Lemmas \ref{lem:ItoEquality1} and \ref{lem:iq}, we get
 \begin{align*}
 (d-1)k_q^2&=2d k_{2q}\leq 2d  \sum_{i \neq j}w_iw_j=\frac{2d}{t} \sum_{i \neq j}w_iw_j+(2d-\frac{2d}{t}) \sum_{i \neq j}w_iw_j \\
 &\leq \frac{d(t-1)}{t}\sum_{i=1}^t w_i^2+\frac{2d(t-1)}{t} \sum_{i \neq j}w_iw_j =\frac{d(t-1)}{t}k_q^2.
 \end{align*}
 From here, we conclude that $t \geq d$. Moreover, if $t=d$, then:
 \begin{align*}
 \sum_{i \neq j}w_iw_j =k_{2q}, \text{ and } \\
 \sum_{i \neq j}w_iw_j =(t-1)\sum_{i=1}^d w_i^2.
 \end{align*}
 The first equality forces all $q$-cycles in $\Gamma$ to be disjoint, and  the second one implies $w_1=w_2=\ldots=w_d$ by Lemma \ref{lem:iq}. 
\end{proof}

Now that we have noted some common features of polynomials $f_{\alpha}$ we take a closer look at stochastic realisations of each type separately.

\section{Type 0}\label{sec:type0} 
Realisations of Type 0 polynomials are the easiest to resolve. It is possible to deduce the following from results in \cite{DD}; here we provide an independent proof relying on the work in Section \ref{sec:2}. 

\begin{theorem}
Let $A$ be an $n \times n$ stochastic matrix, with the characteristic polynomial $f_{\alpha}(t)=(t-\beta)^n-\alpha^n$, $\alpha \in (0,1)$. Then $A$ is permutationally  similar to $\beta I_n +\alpha C_n$. 
\end{theorem}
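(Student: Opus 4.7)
The plan is twofold: first pin down the diagonal of $A$ to $\beta I_n$ using Corollary~\ref{cor:q_cycles}, then identify the rescaled off-diagonal part with a single $n$-cycle permutation matrix via a short spectral argument.

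For the diagonal, apply Corollary~\ref{cor:q_cycles} with $q=1$ and $d=n$. Since the weighted digraph $\Gamma$ of $A$ has exactly $n$ vertices and carries at most one loop per vertex, the lower bound of $n$ loops is tight and the corollary asserts that these loops share a common weight $w$. Comparing to the trace, read off from the coefficient of $t^{n-1}$ in $(t-\beta)^n-\alpha^n$, gives $nw=n\beta$, so $a_{ii}=\beta$ for every $i$.

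Next I would set $B=\alpha^{-1}(A-\beta I_n)$. The matrix $A-\beta I_n$ has zero diagonal, nonnegative off-diagonal entries and row sums $\alpha$, so $B$ is stochastic. The affine substitution $\lambda\mapsto\alpha^{-1}(\lambda-\beta)$ sends the spectrum $\{\beta+\alpha\zeta:\zeta^n=1\}$ of $A$ bijectively to the set of $n$-th roots of unity, so the characteristic polynomial of $B$ is $t^n-1$. It remains to show that any stochastic matrix with characteristic polynomial $t^n-1$ is permutationally similar to $C_n$. Because $t^n-1$ has simple roots, $B$ is diagonalisable and $B^n=I_n$; hence $B^{-1}=B^{n-1}$ is a product of stochastic matrices, itself stochastic, and in particular nonnegative. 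A short column-comparison argument (applied to $Mv_j=e_j$ for the columns $v_j$ of $M^{-1}$) then shows that a stochastic matrix with a nonnegative inverse must be monomial, and the row-sum condition forces the diagonal scaling to be trivial, so $B$ is a permutation matrix. The constraint that its characteristic polynomial equals $t^n-1$ pins the underlying permutation to a single $n$-cycle, so $B$ is permutationally similar to $C_n$, and $A=\beta I_n+\alpha B$ to $\beta I_n+\alpha C_n$.

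The main subtlety is the step ``nonnegative with nonnegative inverse implies monomial'': while classical, it deserves a one-line justification, as above. An alternative route, more in keeping with the Coates-formula style of Section~\ref{sec:2}, would be to induct on cycle length: once $a_{ii}=\beta$ for all $i$, the value $k_m=\binom{n}{m}(-\beta)^m$ of the characteristic polynomial is already accounted for by the $\binom{n}{m}$ linear digraphs consisting of $m$ disjoint loops, so $\Gamma$ cannot contain any cycle of length $m$ for $2\le m\le n-1$. Then Theorem~\ref{thm:dd} applied to the eigenvalue $\beta+\alpha e^{2\pi\ii/n}$ (with the appropriate choice of $k$) confines the nonzero off-diagonal entries of $A$ to a single superdiagonal-plus-corner pattern, forcing $A$ to coincide, up to permutation similarity, with $\beta I_n+\alpha C_n$.
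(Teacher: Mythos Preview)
Your argument is correct, but it differs from the paper's route in both order and tools. The paper first invokes Theorem~\ref{thm:dd} (Dmitriev--Dynkin) with $k=0$ to force $A$, up to permutation, into the shape $D+(I_n-D)C_n$ for some diagonal $D$ with entries in $[0,1]$; only then does it appeal to Corollary~\ref{cor:q_cycles} (with $q=1$, $d=n$) to conclude $D=\beta I_n$. You proceed in the opposite order: Corollary~\ref{cor:q_cycles} first nails the diagonal, and then the rescaled off-diagonal part $B=\alpha^{-1}(A-\beta I_n)$ is handled by the purely spectral observation that a stochastic matrix satisfying $B^n=I_n$ has a stochastic inverse and hence must be a permutation matrix, necessarily an $n$-cycle. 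This bypasses Theorem~\ref{thm:dd} entirely, which is an attractive feature if one wants a self-contained proof for the Type~0 case; on the other hand, the paper's approach is shorter and dovetails with the use of Theorem~\ref{thm:dd} throughout the later sections. Your alternative route (Coates-style induction ruling out $m$-cycles for $2\le m\le n-1$, then Theorem~\ref{thm:dd}) is also valid, though the induction step is not needed once Theorem~\ref{thm:dd} is in play: the paper simply applies Theorem~\ref{thm:dd} at the outset, before any information about the diagonal is extracted.
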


\begin{proof}
Since $f_{\alpha}$ has a root $\lambda \in \partial\Theta_n$ with $0< \arg(\lambda) < \frac{2\pi}{n}$, we get $k=0$ in Proposition \ref{prop:cycle_structure}. This implies that any $n \times n$ stochastic matrix $A$ with characteristic polynomial $f_{\alpha}$ is, up to permutation similarity, of the form $A=D+(I_n-D)C_n$, where $D \in \R^{n \times n}$ is a diagonal matrix with diagonal elements $\alpha_1, \ldots, \alpha_n$, $\alpha_i \in [0,1]$. Since $f_{\alpha}$ is of Type 0 we have  $d=n$ and $q=1$, hence the digraph corresponding to $A$ has to have all its diagonal elements equal to $\beta$, by Corollary \ref{cor:q_cycles}, or equivalently $D=\beta I_n$. 
\end{proof}

Note that a Type 0 Ito polynomial of degree $n$ has a unique stochastic realisation up to permutation similarity.

\section{Type I}\label{sec:type1}

For a stochastic matrix whose characteristic polynomial is Type I (with parameters $n,q,$ say) we find from Proposition \ref{prop:cycle_structure} that the corresponding directed graph can only have cycles of lengths $q$ and $n$. Our next result 
 characterises such directed graphs. 

\begin{lemma}\label{thm:cycles}
Let $q,n \in \mathbb{N},$ satisfy $\frac{n+1}{2}\le q \le n-1$, $\gcd(q,n)=1$. Let  
$\Gamma'$ be a digraph on $n$ vertices containing only cycles of order $q$ and $n$. Then $\Gamma'$ is isomorphic to one of the digraphs $\Gamma$ satisfying $V(\Gamma)=\{1,2,\ldots,n\}$ and $$\mc E_n \cup \{(1,1+(1-q)_n)\} \subseteq E(\Gamma) \subseteq \mc E_n \cup \mc E_{q},$$ where
 $\mc E_n:=\{(j,1+(j)_n); j=1, \ldots, n  \}$ and $\mc E_q:=\{(j, 1+(j-q)_n); j=1, \ldots, n+1-q\}$. Conversely, all such digraphs $\Gamma$ have only cycles of order $q$ and $n$. 
\end{lemma}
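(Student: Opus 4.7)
The plan is to prove both directions of the claim. For the forward direction, I first fix an $n$-cycle of $\Gamma'$ (which exists in the contexts of interest by Proposition \ref{prop:cycle_structure}) and relabel the vertices of $\Gamma'$ as $1, 2, \ldots, n$ cyclically along this Hamilton cycle, so that $\mc E_n \subseteq E(\Gamma')$ under the new labels. Next I show that any other edge takes a very specific form: for $(v, w) \in E(\Gamma') \setminus \mc E_n$, combining $(v, w)$ with the Hamilton walk from $w$ forward to $v$ gives a directed cycle of length $1 + (v - w)_n$. By hypothesis this length is $q$ or $n$; length $n$ would force $w = 1 + (v)_n$, contradicting $(v,w) \notin \mc E_n$, so $w = 1 + (v - q)_n$. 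Thus every non-Hamilton edge has the form $(v, 1 + (v - q)_n)$, and I set $V^* := \{v : (v, 1 + (v-q)_n) \in E(\Gamma')\}$, which is nonempty by the $q$-cycle hypothesis.

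The heart of the proof is to show that $V^*$ is contained in a cyclically consecutive arc of length at most $n + 1 - q$ on the Hamilton cycle; equivalently, the maximum cyclic gap between consecutive elements of $V^*$ is at least $q$. I argue by contradiction: assume that all cyclic gaps are at most $q - 1$, and construct a simple cycle of length in $\{1, \ldots, n\} \setminus \{q, n\}$. The construction uses the deterministic walk that, at each vertex, takes the non-Hamilton edge when the vertex lies in $V^*$ and the Hamilton edge otherwise. Starting at any source, this walk closes into a simple cycle $C$. The small-gap hypothesis forces $C$ to use at least two non-Hamilton edges, because a pure $q$-cycle would require its $q - 1$ interior Hamilton steps to cross $q - 1$ consecutive non-source vertices, contradicting $g_{\max} \le q - 1$. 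Using the identity $a \equiv b(q - 1) \pmod n$ for any closed walk with $a$ Hamilton and $b$ non-Hamilton edges and $a + b \le n$, a case analysis on $(a, b)$ rules out $|C| = q$ (which requires $b = 1$) and $|C| = n$ (which, since $\gcd(q, n) = 1$, forces $b = n$, i.e.\ $V^* = \{1, \ldots, n\}$, a degenerate case handled separately by producing a shorter illegal cycle via a non-greedy step choice). Hence $|C| \in \{1, \ldots, n-1\} \setminus \{q\}$, contradicting the hypothesis. A final cyclic rotation of the labels places $V^*$ inside $\{1, \ldots, n + 1 - q\}$ with $1 \in V^*$, completing the forward direction.

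The converse direction is handled symmetrically: given $\Gamma$ with $E(\Gamma) \subseteq \mc E_n \cup \mc E_q$, the Hamilton cycle contributes length $n$, each non-Hamilton edge $(v, 1+(v-q)_n) \in \mc E_q$ closes into a $q$-cycle via the Hamilton walk from $1+(v-q)_n$ back to $v$, and no simple cycle uses two or more non-Hamilton edges; the source window $\{1, \ldots, n+1-q\}$ has length strictly less than $2q$, so two non-Hamilton shortcuts would force their intermediate Hamilton walks to overlap at a common vertex. The main obstacle is the contradiction step in the forward direction, specifically in handling the degenerate case $V^* = \{1, \ldots, n\}$: the greedy walk there produces an $n$-cycle, so one must construct a shorter illegal cycle by a non-greedy choice, such as taking a Hamilton edge at a carefully selected source.
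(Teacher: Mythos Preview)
Your approach differs substantially from the paper's. Instead of analysing a greedy walk, the paper argues directly that if two non-Hamilton sources $i,j$ are ``far apart'' in the sense that both $|\mc V_{i,j}|\ge n-q+2$ and $|\mc V_{j,i}|\ge n-q+2$, then one can splice the two $q$-cycles through $i$ and $j$ along the Hamilton cycle to obtain a cycle of length $2q-n$, which is forbidden. This immediately yields that for \emph{every} pair of sources one of the two arcs between them has at most $n-q+1$ vertices, and a short extremal (max--min) argument then pins all sources into a single arc of that length. The virtue of this route is that it produces the forbidden cycle in one explicit stroke, with no separate degenerate case to chase down.

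Your greedy-walk argument is viable and the modular identity $a\equiv b(q-1)\pmod n$ does exactly the work you claim for ruling out $|C|=q$ (forcing $b=1$) and $|C|=n$ (forcing $b\in\{0,n\}$). The gap is precisely where you flag it: the degenerate case $V^*=\{1,\dots,n\}$ with $\gcd(q-1,n)=1$. You assert that ``taking a Hamilton edge at a carefully selected source'' yields an illegal cycle, but you never carry this out. If one takes the Hamilton edge $1\to 2$ and then follows non-Hamilton edges, the resulting cycle has length $1+k_1$ where $k_1=(q-1)^{-1}\bmod n$; checking $1+k_1\notin\{q,n\}$ amounts to showing $n\nmid q$ (immediate) and $n\nmid q-2$, the latter failing only when $q=2$, i.e.\ $n=3$. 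So the fix works, but it is a genuine computation that your proposal omits, and the residual $n=3$ case still needs to be addressed (indeed for $n=3,\ q=2$ the complete digraph $K_3$ has only $2$- and $3$-cycles, so the lemma as stated is delicate there). The paper's $(2q-n)$-cycle construction sidesteps this bookkeeping: whenever $2q\ge n+2$ one can exhibit a far-apart pair inside $V^*=\{1,\dots,n\}$ directly.
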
 
\begin{proof}
First we show that a digraph $\hat \Gamma$ on $n$ vertices with $E(\hat \Gamma )=\mc E_n \cup \mc E_q$ has only cycles of orders $n$ and $q$. 
Note that for  $j=1, \ldots, n+1-q$, we have $1+( j-q)_n   \in \{n+2-q, \ldots, n\}$, unless $2 q=n+1$, $j=n+1-q$, in which case $1+( j-q)_n=1$.

To see the claim, suppose that a cycle $C$ in $\hat \Gamma$ includes the edge $(j_0, 1+(j_0-q)_n)$ with $1+(j_0-q)_n  \in \{n+2-q, \ldots, n\}.$  Since the vertices in $\{n+2-q, \ldots, n\}$ all have out-degree $1$, we find that the edge $(j_0, 1+(j_0-q)_n)$   in $C$ is necessarily followed by the path  $j_0+1-q \rightarrow j_0+2-q \rightarrow \ldots \rightarrow n \rightarrow 1.$ If $C$ includes another edge  $(j_1,1+( j_1-q)_n) \in \mc E_q$ then either  $( j_1+1-q)_n=1$ or $C$ passes through $1$ a second time by the above argument, which is impossible. 
It follows that any cycle in $\hat \Gamma$ contains just one edge from $\mc E_q$, or no such edges; in the former case the cycle has order $q$, while in the latter case the cycle has order $n$.

Let $\Gamma$ be a directed graph containing only cycles of order $n$ and $q$. Since $\Gamma$ contains an $n$-cycle, we can, without loss of generality assume that $\mc E_n \subseteq E(\Gamma).$ Now, any edge not of the form $(j, 1+(j-q)_n)$ would produce a cycle of order different than $q$ and $n$ in $\Gamma$, so all edges in $E(\Gamma)$, not in $\mc E_n$, are of the form $(j,1+(j-q)_n)$.

For $i,j \in \{1,2,\ldots,n\}$, $i \neq j$, define:
\begin{align*}
\mc V_{i,j}&:=\{i,1+(i)_n,1+(i+1)_n,\ldots,1+(j-2)_n,j\} \\
\mc V_{j,i}&:=\{j,1+(j)_n,1+(j+1)_n,\ldots,1+(i-2)_n,i\},
\end{align*} so that $\{1,2,\ldots,n\}=\mc V_{i,j} \cup \mc V_{j,i}$ and $\mc V_{i,j}  \cap \mc V_{j,i}=\{i,j\}$.
Assume 
\begin{equation}\label{eq:ij}
(i,1+(i-q)_n), (j,1+(j-q)_n)\in E(\Gamma),
\end{equation}
 with $|\mc V_{i,j}| \geq n-q+2$ and  $|\mc V_{j,i}|\geq n-q+2$. Then $1+(i-q)_n \in \mc V_{i,j}$, $1+(j-q)_n  \in \mc V_{j,i}$, and 
$$i \rightarrow (i+1-q)_n \rightarrow (i+2-q)_n \rightarrow \ldots \rightarrow j \rightarrow  (j+1-q)_n \rightarrow (j+2-q)_n \rightarrow \ldots \rightarrow i $$ is a cycle in $\Gamma$ of length $2q-n$, a contradiction. 
(If $1+(i-q)_n=j$ then the corresponding cycle is:
$i \rightarrow 1 +(i-q)_n= j \rightarrow 1+ (j-q)_n \rightarrow 1+(j+1-q)_n \rightarrow \ldots \rightarrow i.$)
Hence, for any two vertices $i,j$ satisfying \eqref{eq:ij} we have $|\mc V_{i,j}| \leq n-q+1$ or  $|\mc V_{j,i}|\leq n-q+1$. Among those, let $i_0$ and $j_0$ be chosen so that $\min\{|\mc V_{i_0,j_0}| ,|\mc V_{j_0,i_0}| \}$ is maximal. Without loss of generality we may assume $\min\{|\mc V_{i_0,j_0}| ,|\mc V_{j_0,i_0}| \}=|\mc V_{i_0,j_0}|\leq n-q+1$ and $i_0=1$; hence $j_0 \leq n+1-q$, as desired. 
\end{proof}

Now that we understand the restrictions on the edges of the associated digraph, we can characterise all stochastic realisations of Type I polynomials. 

\begin{theorem}\label{thm:TypeIR}
Let $n$ and $q$ be positive integers satisfying $2 \leq q <n$, $2 q>n$.  An $n\times n$ stochastic matrix has the characteristic polynomial $f_\alpha(t)=t^n - (1-\alpha)t^{n-q}-\alpha$ if and only if it is permutationally similar to a matrix of the form: 
$$A=(D \oplus I_{q-1})C_n+ ((I_{n+1-q}-D) \oplus 0_{q-1})C_n^{n-q+1},$$ 
where  $D \in \R^{(n+1-q) \times (n+1-q)}$ is a diagonal matrix, with diagonal entries $\alpha_i \in (0,1]$ satisfying $\prod_{i=1}^{n+1-q} \alpha_i =\alpha$.
\end{theorem}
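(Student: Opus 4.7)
The plan is to prove each direction of the equivalence using the combinatorial machinery assembled in Section \ref{sec:2}.

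For the ``if'' direction, let $A$ be a matrix of the stated form and let $\Gamma(A)$ be its weighted digraph. I would first identify the edge set of $\Gamma(A)$: it consists of the $n$-cycle $\mc E_n$ (with edge $(i,1+(i)_n)$ carrying weight $\alpha_i$ for $i\le n+1-q$ and weight $1$ otherwise) together with those edges of $\mc E_q$ corresponding to indices $i$ with $\alpha_i<1$. By Lemma \ref{thm:cycles}, every cycle of $\Gamma(A)$ has length $q$ or $n$. The hypothesis $2q>n$ forces $n+1-q\le q$, and inspection of the vertex sets of the potential $q$-cycles shows every such cycle passes through vertex $1$; thus no two $q$-cycles are vertex-disjoint, and combined with $\lfloor n/q\rfloor=1$, Theorem \ref{thm:coefficients} forces $k_j=0$ for $j\notin\{0,q,n\}$. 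It then remains to compute $k_q$ and $k_n$. The $n$-cycle has weight $\prod_{i=1}^{n+1-q}\alpha_i=\alpha$, giving $k_n=-\alpha$. The $q$-cycle through the edge $(i,1+(i-q)_n)$ has weight $(1-\alpha_i)\alpha_1\alpha_2\cdots\alpha_{i-1}$, and summing for $i=1,\ldots,n+1-q$ telescopes to $1-\prod_{i=1}^{n+1-q}\alpha_i=\beta$, so $k_q=-\beta$, matching $f_\alpha$.

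For the ``only if'' direction, suppose $A$ is a stochastic matrix with characteristic polynomial $f_\alpha$. Since $d=\lfloor n/q\rfloor=1$, Proposition \ref{prop:cycle_structure} restricts the cycles in $\Gamma(A)$ to lengths $q$ and $n$, and at least one of each length must occur. Lemma \ref{thm:cycles} then permits a relabeling of the vertices (a permutation similarity) so that $E(\Gamma(A))\subseteq \mc E_n\cup\mc E_q$ and $\mc E_n\cup\{(1,1+(1-q)_n)\}\subseteq E(\Gamma(A))$. For $i\le n+1-q$, let $\alpha_i\in[0,1]$ denote the weight on the $n$-cycle edge out of vertex $i$; by row stochasticity the weight on $(i,1+(i-q)_n)$ is then $1-\alpha_i$, while for $i>n+1-q$ the only outgoing edge is on the $n$-cycle and carries weight $1$. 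This realises $A$ in the stated algebraic form. The constraint $\prod_{i=1}^{n+1-q}\alpha_i=\alpha$ follows from Theorem \ref{thm:coefficients} applied to $k_n=-\alpha$, and since $\alpha>0$ each $\alpha_i$ must be strictly positive, yielding $\alpha_i\in(0,1]$.

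The main obstacle is the telescoping computation for $k_q$: one must correctly identify, for each candidate $q$-cycle, its vertex set and the associated edge weights. This requires bookkeeping that uses $n+1-q\le q$ to ensure that every $q$-cycle first traverses a ``long jump'' edge from $\mc E_q$, then passes through the unit-weight segment $\{n-q+2,\ldots,n\}$, and finally returns along an initial block of $\alpha$-weighted edges. The same structural observation that every $q$-cycle passes through vertex $1$ is what annihilates the coefficients $k_{2q},k_{3q},\ldots$, ensuring that no spurious terms appear and the characteristic polynomial reduces to exactly $f_\alpha$.
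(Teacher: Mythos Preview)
Your proposal is correct and follows essentially the same route as the paper: for the ``only if'' direction you invoke Proposition \ref{prop:cycle_structure} and Lemma \ref{thm:cycles} to pin down the pattern, then read off the weights from stochasticity and the constant term, exactly as the paper does; for the ``if'' direction the paper simply declares the computation ``straightforward'', whereas you spell out the telescoping sum for $k_q$ and the observation that all $q$-cycles meet at vertex $1$, which is a welcome elaboration rather than a different method. One minor remark: the appeal to $\lfloor n/q\rfloor=1$ in your ``if'' argument is redundant, since $2q>n$ already ensures there is no index $j=2q,3q,\ldots$ strictly below $n$, so the vanishing of those coefficients follows without the disjointness observation (though the latter is also valid).
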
 

\begin{proof} Suppose that a stochastic matrix has characteristic polynomial $f_\alpha$. Then the associated digraph has only cycles of lengths $n$ and $q$. Lemma \ref{thm:cycles} then determines the pattern   of the matrix, from which we deduce that our matrix must be permutationally similar to a matrix $A$ of the form $(D \oplus I_{q-1})C_n+ ((I_{n+1-q}-D) \oplus 0_{q-1})C_n^{n-q+1}.$
The weights $\alpha_i$ and $1-\alpha_i$ with $\alpha_i \in (0,1]$ are required for the matrix $A$ to be stochastic, and $\prod_{i=1}^{n+1-q} \alpha_i =\alpha$ fixes the constant term of the characteristic polynomial of $A$ to be equal to $-\alpha$. 

It is straightforward to determine the converse -- i.e. that any matrix $A$ as described above has $f_\alpha$ as its characteristic polynomial. 
\end{proof}

\begin{remark}
The sparsest realisation of Type I polynomial is  unique up to permutation similarity,  and occurs in Theorem \ref{thm:TypeIR} for the choice $\alpha_1=\alpha$, $\alpha_j=1$, $j=2,\ldots, n-q+1$. 
\end{remark}

\begin{example}
Let $f_{\alpha}(t)=t^{5}- 
 t(1-\alpha) - \alpha$.
  According to Theorem \ref{thm:TypeIR} any stochastic matrix with the characteristic polynomial $f_{\alpha}(t)$ is permutationally similar to a matrix of the form
$$A=\left(
\begin{array}{ccccc}
 0 & \alpha_1 & 1-\alpha_1 & 0 & 0 \\
 0 & 0 & \alpha_2 & 1-\alpha_2 & 0 \\
 0 & 0 & 0 & 1 & 0 \\
 0 & 0 & 0 & 0 & 1 \\
 1 & 0 & 0 & 0 & 0 \\
\end{array}
\right),$$
where $\alpha_i \in (0,1]$ and $\alpha_1\alpha_2 =\alpha$. 
\end{example}

\section{Type II}\label{sec:type2} 

Since each Ito polynomial corresponds to a particular   Karpelevi\v c arc and Farey pair, it is useful to express $k$ in Theorem \ref{thm:dd} using the parameters determining that Farey pair. For Type II reduced Ito polynomials, this is done in the lemma below. 

\begin{lemma}\label{cor:DDTII}
Let $n=qd$, $q \geq 2$,  and let $A'$ be an $n \times n$ stochastic matrix with an eigenvalue $\lambda \in \mc K_n(\{q,s\})\setminus \Theta_{n-1}$. Then $A'$ is permutationally similar to a matrix $A=(a_{ij})$ with $a_{i j} \neq 0$ only for $j=1+(i+p d-1)_n$ or $j=1+(i+pd)_n$.  
\end{lemma}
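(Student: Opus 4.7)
The plan is to apply Theorem~\ref{thm:dd}; the only task is to identify the integer $k$ appearing there as $k=pd$ in the Type II setting. Concretely, I aim to show that for $\lambda\in \mc K_n(\{q,s\})\setminus \Theta_{n-1}$ one has $\arg \lambda \in \left(\frac{2\pi pd}{n},\frac{2\pi(pd+1)}{n}\right)$, after which Theorem~\ref{thm:dd} with $k=pd$ delivers exactly the nonzero pattern asserted.

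I would first invoke the geometric picture of $\partial\Theta_n$: the arc $\mc K_n(\{q,s\})$ joins the unit--circle boundary points $e^{2\pi\ii p/q}$ and $e^{2\pi\ii r/s}$ of $\partial\Theta_n$, and since $\Theta_n$ is star--shaped with respect to the origin and has no other boundary point on the unit circle strictly between these two consecutive Farey vertices, the arc must lie in the closed sector bounded by the rays from the origin through the two endpoints. Hence $\frac{2\pi p}{q}\le \arg\lambda \le \frac{2\pi r}{s}$ for every $\lambda$ on the arc. The Type II hypotheses force $q<n$ and $s=qd-z<n$, so the endpoint roots of unity lie in $\Theta_q\cup\Theta_s\subseteq \Theta_{n-1}$; the assumption $\lambda\notin\Theta_{n-1}$ rules these out and makes both inequalities strict.

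It then remains to recast the strict bounds in the $k/n$--form required by Theorem~\ref{thm:dd}. Using $n=qd$, the lower bound rewrites as $\arg\lambda > \frac{2\pi pd}{n}$. For the upper bound, the Farey neighbour identity $rq-ps=1$ yields $\frac{r}{s}-\frac{p}{q}=\frac{1}{qs}$, while the Type II parameters $q,d\ge 2$ and $1\le z\le q-1$ give $s\ge q(d-1)+1 > d$ (since $(q-1)(d-1)\ge 1$). Hence $\frac{r}{s} < \frac{p}{q}+\frac{1}{qd} = \frac{pd+1}{n}$, so $\arg\lambda<\frac{2\pi(pd+1)}{n}$. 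Combining these strict bounds places $\arg\lambda$ in the claimed open sector, and Theorem~\ref{thm:dd} finishes the argument. The only step that really warrants explicit justification is the sector containment of the Karpelevi\v c arc, which leans on the geometric description of $\partial\Theta_n$ and the fact that a Farey pair has no intermediate fraction in $\F_n$; the remaining Farey arithmetic is routine.
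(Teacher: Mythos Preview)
Your argument is correct and follows the same route as the paper: invoke Theorem~\ref{thm:dd} and identify the integer $k$ there as $pd$ by sandwiching $\arg\lambda$ between $\frac{2\pi pd}{n}$ and $\frac{2\pi(pd+1)}{n}$. The paper simply asserts $\frac{r}{s}\le \frac{pd+1}{qd}$, whereas you supply the underlying reason (the Farey identity $rq-ps=1$ together with $s>d$), so your write-up is in fact a little more explicit than the original.
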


\begin{proof}
By Lemma \ref{thm:dd}, $A'$ is, under permutation similarity, similar to a matrix $A=(a_{ij})$ with  $a_{i\, j} \neq 0$ only for $i=1+(j+k-1)_n$ or $i=1+(j+k)_n$, where $\frac{2 \pi k}{n} \leq \arg{\lambda} \leq \frac{2 \pi (k+1)}{n}$. 
On the other hand, we are assuming  $\frac{2 \pi p}{q} \leq \arg{\lambda} \leq \frac{2 \pi r}{qd}$.
Since $\frac{p}{q}=\frac{pd}{qd}$, and $\frac{r}{s}\leq \frac{pd+1}{qd}$, the claim follows. 
\end{proof}

Lemma \ref{cor:DDTII} restricts the edges that can appear in the digraph $\Gamma$. In particular, we know that any realising matrix $A$ is going to be of the form $DC_n^{pd}+(I_n-D)C_n^{pd+1}$, for some diagonal matrix $D$ whose diagonal entries are contained in $[0,1]$. As some of elements of $D$ can be equal to $0$ or $1$, at this point we do not know which of those edges are actually included in $\Gamma$. In the next lemma we take a closer look at $q$-cycles that can appear in a digraph of a matrix of the form $A=DC_n^{pd}+(I_n-D)C_n^{pd+1}$.

\begin{lemma}\label{lem:n=qd}
Let $\zeta=(\sfrac{p}{q},\sfrac{r}{s}) \in \mc F_n$, $n=qd$, $q\geq 2$. Let $\Gamma$ be a digraph with vertex set $V=\{1,2,\ldots,n\}$ and edge set $$E=\{(i,1+(i+p d-1)_n); i=1,\ldots,n\} \cup \{(i,1+(i+pd)_n); i=1,\ldots,n\}.$$ Then:
\begin{enumerate}
\item Edges of the form $(i,1+(i+pd-1)_n)$, $t=1,\ldots,n$, form $d$ disjoint $q$-cycles in $\Gamma$.
\item None of the edges of the form  $(i,1+(i+pd)_n)$ is contained in a $q$-cycle in $\Gamma$. 
\end{enumerate}
\end{lemma}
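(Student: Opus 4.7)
The plan is to reinterpret both families of edges as cyclic shifts and then argue on the level of total shift accumulated along a cycle. Writing $1+(i+pd-1)_n$ as the unique representative in $\{1,\ldots,n\}$ of $i+pd \pmod n$, and similarly $1+(i+pd)_n$ as the representative of $i+pd+1 \pmod n$, the edges of the first kind act on the vertex labels as the shift $\sigma_A: i \mapsto i+pd$ modulo $n$, while the edges of the second kind act as the shift $\sigma_B: i \mapsto i+pd+1$ modulo $n$.

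For part (1), I would observe that in the subdigraph using only edges $(i,1+(i+pd-1)_n)$, every vertex has out-degree and in-degree $1$, so this subdigraph is a disjoint union of cycles whose common length equals the order of $\sigma_A$ in $\mathbb{Z}/n\mathbb{Z}$. That order is $n/\gcd(pd,n) = qd/\gcd(pd,qd) = q/\gcd(p,q)$, and since $(p/q,r/s)$ is a Farey pair we have $\gcd(p,q)=1$, so the order is exactly $q$. Thus the $n=qd$ vertices partition into $d$ vertex-disjoint $q$-cycles, all made of edges of the first form, proving (1).

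For part (2), suppose a $q$-cycle $C$ in $\Gamma$ contains $a$ edges of the first type and $b$ edges of the second type, so that $a+b=q$. Travelling around $C$ returns us to our starting vertex, so the accumulated shift vanishes modulo $n$, giving
\begin{equation*}
a(pd) + b(pd+1) \equiv 0 \pmod{n}, \qquad \text{i.e.,} \qquad qpd + b \equiv 0 \pmod{n}.
\end{equation*}
Since $n=qd$ divides $qpd$, this reduces to $b \equiv 0 \pmod{n}$. But $0 \le b \le q < qd = n$ (using $d \ge 2$), forcing $b=0$. Hence no edge of the form $(i,1+(i+pd)_n)$ can occur on a $q$-cycle, which is (2).

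The only subtle point is the bookkeeping in translating the expressions $1+(i+pd-1)_n$ and $1+(i+pd)_n$ into shifts by $pd$ and $pd+1$ respectively, and then remembering that $\gcd(p,q)=1$ is what makes $\sigma_A$ have full order $q$ rather than a proper divisor; once those are in place the rest is a one-line modular computation. No deeper obstacle is anticipated.
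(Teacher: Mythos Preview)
Your argument is correct and follows essentially the same route as the paper: both proofs interpret the two edge families as cyclic shifts by $pd$ and $pd+1$ modulo $n$, then analyse the total shift accumulated around a putative $q$-cycle. Your version is marginally cleaner, obtaining $b\equiv 0\pmod n$ in a single step (and invoking $d\ge 2$ explicitly), whereas the paper first deduces $q\mid j_1$ and then derives $d\mid 1$ as a contradiction; your part (1) also makes the role of $\gcd(p,q)=1$ explicit where the paper leaves it implicit.
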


\begin{proof}
Note that vertices $i, 1+(i-1+pd)_n, 1+(i-1+2pd)_n, \ldots, 1+(i-1+(q-1)pd)_n$ are contained in a $q$-cycle in $\Gamma$, since $1+(i-1+qpd)_n=i$. This proves the first claim. 

Assume that the second item does not hold. Then we can, without loss of generality, assume that $(1, 1+(1+pd)_n)$ is contained in a $q$-cycle, and let $1 \leq j_1 \leq q$ be the number of edges of type $(i,1+(i+pd)_n)$ in this cycle.  Hence, $(1+q pd+j_1)_n=1$, or equivalently $(q pd+j_1)_{qd}=0$. 
Now we see that $qpd+j_1$ has to be divisible by $q$, so $j_1=q$. From here we get that $pd+1$ has to be divisible by $d$, a contradiction.
\end{proof}

To get further restrictions on the cycles we need to consider conditions on the weights of the cycles coming from the coefficients of the reduced Ito polynomial $f_{\alpha}(t)$. Our first observation tells us that all the $q$-cycles identified in Lemma \ref{lem:n=qd} have to appear. 
 
\begin{lemma}\label{prop:cyclesTII}
Let  $f_{\alpha}(t)=(t^q-\beta)^{d}-\alpha^{d}t^z$, $s=qd-z$, $z \in \{0,\ldots, q-1\}$ be a Type II reduced Ito polynomial. The  digraph $\Gamma$ corresponding to any stochastic realisation of $f_{\alpha}(t)$ has $d$ disjoint $q$-cycles of equal weight $\beta$, and no cycles of order $kq$ for $k \geq 2$.
\end{lemma}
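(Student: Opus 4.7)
The plan is to combine the structural restrictions from Lemmas \ref{cor:DDTII} and \ref{lem:n=qd} with the counting result in Corollary \ref{cor:q_cycles}, and then close the argument by an induction on $k$ using coefficient comparisons via Theorem \ref{thm:coefficients}.

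First, by Lemma \ref{cor:DDTII} one can assume (up to permutation similarity) that the realising matrix has the form $DC_n^{pd}+(I_n-D)C_n^{pd+1}$ for a diagonal matrix $D$ with entries in $[0,1]$, so every edge of $\Gamma$ is either a \emph{regular} edge $(i,1+(i+pd-1)_n)$ or a \emph{shortcut} edge $(i,1+(i+pd)_n)$. Lemma \ref{lem:n=qd}(2) then ensures that no shortcut edge can lie on a $q$-cycle, so every $q$-cycle of $\Gamma$ uses only regular edges; by Lemma \ref{lem:n=qd}(1) these regular edges form exactly $d$ mutually disjoint potential $q$-cycles. Hence $\Gamma$ contains at most $d$ $q$-cycles. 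Corollary \ref{cor:q_cycles} supplies the opposite inequality, so $\Gamma$ contains exactly $d$ $q$-cycles; in particular every regular edge has positive weight, and Corollary \ref{cor:q_cycles} then gives that these cycles are vertex disjoint and share a common weight $w$.

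The weight $w$ is pinned down by comparing the coefficient of $t^{n-q}$ computed from the two sides. Theorem \ref{thm:coefficients} yields $k_q = -dw$, while expanding $f_{\alpha}(t)=(t^q-\beta)^d-\alpha^d t^z$ gives $k_q = -d\beta$ (the term $-\alpha^d t^z$ cannot contribute to $t^{n-q}$ since $z<q$), so $w=\beta$ as required.

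For the second claim I would induct on $k\ge 2$. For the base case, a linear digraph in $\mc L_{2q}$ is either a pair of disjoint $q$-cycles (there are $\binom{d}{2}$ such pairs, each contributing $\beta^2$) or a single $2q$-cycle; equating to the polynomial coefficient $k_{2q}=\binom{d}{2}\beta^2$ forces the total weight of $2q$-cycles to vanish, so none exist. For the inductive step, assume no $jq$-cycles exist for $2\le j<k$. The decompositions of a linear digraph in $\mc L_{kq}$ involve cycles of lengths $q$, multiples of $q$, or $s$; since $s=qd-z$ with $z\ge 1$ is not a multiple of $q$, an $s$-cycle cannot fit into a linear digraph on $kq\le qd$ vertices (the remainder would be a positive quantity less than $q$). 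Combined with the inductive hypothesis, the only surviving decompositions are $k$ of the $d$ available disjoint $q$-cycles or a single $kq$-cycle. Equating the Coates expression to $k_{kq}=\binom{d}{k}(-\beta)^k$ (where $-\alpha^d t^z$ again misses the slot $t^{q(d-k)}$) forces the total weight of $kq$-cycles to be zero, completing the induction.

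The main bookkeeping obstacle is the inductive step: one must enumerate every multiset of cycle lengths summing to $kq$ and systematically exclude each ``mixed'' option using Proposition \ref{prop:cycle_structure}, the disjointness of the $d$ $q$-cycles, the non-divisibility of $s$ by $q$, and the inductive hypothesis. Once this accounting is clean, the polynomial-versus-digraph coefficient identity delivers the conclusion with no further computation.
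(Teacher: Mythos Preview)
Your argument is correct and follows essentially the same route as the paper: bound the number of $q$-cycles from above via Lemmas \ref{cor:DDTII} and \ref{lem:n=qd}, from below via Corollary \ref{cor:q_cycles}, and then compare the Coates expression for $k_{kq}$ with the binomial coefficient $\binom{d}{k}(-\beta)^k$ to kill all $kq$-cycles. The paper compresses your induction into the single sentence ``for $b=1,\ldots,d$ the coefficient of $t^{qb}$ is equal to the contribution of the $q$-cycles,'' but the content is the same; your version is simply more explicit (and in fact cleaner) about excluding mixed linear subgraphs and the $s$-cycle case.
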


\begin{proof}
By Lemma \ref{lem:n=qd} we know that $\Gamma$ has at most $d$ $q$-cycles; hence by Corollary \ref{cor:q_cycles} we find that  $\Gamma$ has precisely $d$ $q$-cycles, all of equal weight $\beta$. 
From here we determine that for $b=1, \ldots, d$, the coefficient of $t^{qb}$ is equal to the contribution of the $q$-cycles, hence the $q$-cycles are the only cycles of lengths divisible by $q$ in $\Gamma$. 
\end{proof}

Let  $f_{\alpha}(x)=(x^q-\beta)^{d}-\alpha^{d}x^z$, $n=q d$, $d \geq 2$, $s=qd-z$, $z \in \{0,\ldots, q-1\}$ be a Type II reduced Ito polynomial, and let $A=(a_{i,j})$ be a stochastic matrix with the characteristic polynomial $f_{\alpha}(t)$. Based on the observations above,
we know that any stochastic matrix $A$  is of the form $A=DC_n^{pd}+(I_n-D)C_n^{pd+1}$, where $D=\mathrm{diag}(\beta_1, \ldots,\beta_n)$ satisfies:
$\prod_{k=0}^{q-1} (\beta_{1+(i-1+k p d)_n})=\beta$ for $i=1,\ldots,d$. The corresponding digraph of any matrix of this form will have $d$ disjoint $q$-cycles, and no other cycles of order $k q$. From Proposition \ref{prop:cycle_structure} we know that the only other possible cycles are of order $s=qd-z$. This restriction forces some $\beta_i$ to be equal to $1$. 
To better understand this restriction, we use permutation similarity on $A$, so that vertices in each $q$-cycle are numbered  consecutively, as follows: 
\begin{enumerate}
\item For  $t=0,\ldots,d-1$, let us define: $$\mc E_t:=\{(1+t q+i, 1+t q +(i+1)_q), i=0, \ldots, q-1\}.$$ For a fixed $t$ those elements correspond to a $q$-cycle that we denote by $\Gamma_t$. Hence:
$$\prod_{(i,j) \in \mc E_t} a_{i,j} =1-\alpha.$$
\item Let $\mc E_{t,(t+1)_d}$ be the set of all edges in $\Gamma$ between $\Gamma_t$  and $\Gamma_{(t+1)_d}$. We know that $\mc E_{t,(t+1)_d}$ is not empty, and up to permutation similarity we can assume:
$$\mc E_{t, t+1} \subseteq \{(1+t q+i, 1+(t+1) q+i); i= 0,\ldots,q-1\},$$
for $t=0,\ldots,d-2$, and 
$$ \mc E_{d-1,0} \subseteq \{(1+(d-1) q+i, 1+(z+d+i)_q); i =0,\ldots,q-1\}.$$
\end{enumerate}
To recap, the set of edges in $\Gamma$ is equal to $(\cup_{t=0}^{d-1}  \mc E_t)\cup(\cup_{t=0}^{d-1}\mc E_{t,(t+1)_d})$, where the $\mc E_t$ are already determined, and we only have partial information for the  $\mc E_{t,(t+1)_d}$. 
The observation that the only cycles in $\Gamma$ that contain edges from $\mc E_{t,(t+1)_d}$, $t=0,\ldots,d-1$, are of order $n-z$, poses restrictions on those sets. Before we look at those in detail, we can write down all stochastic realisations of $f_{\alpha}(t)$ with minimal number of edges, i.e. realisations that contain a single $(n-z)$-cycle. 

Any collection of edges $\mc E=\{e_t=(a_t,b_t) \in \mc E_{t,(t+1)_d}$, $t=0,\ldots,d-1\}$, is contained on a cycle $\Gamma(\mc E)$ in $\Gamma$, that, apart from edges in $\mc E$, also contains edges from the sets $\mc E_t$, $t=0,\ldots,d-1$. Since we require this cycle to have $n-z$ vertices, we get: 
\begin{equation}\label{eq:n-z}
n-z=\sum_{t=0}^{d-1} ((a_t-b_{(t-1)_q})_q+1).
\end{equation}
 Furthermore, if the permutation similarity is fixed as above, then the $b_t$ are defined by $a_t$ as follows:
\begin{align*}
a_t&=1+t q+i_t, \, &b_t&=1+(t+1)q+i_t \text{ for }t=0,\ldots,d-2, \\
a_{d-1}&=1+(d-1) q+i_{d-1}, \, &b_{d-1}&=1+(z+d+i_{d-1})_q.
\end{align*}
Hence \eqref{eq:n-z} becomes:
\begin{equation}\label{eq:length}
n-z-d=(i_0-(z+d+i_{d-1})_q)_q+\sum_{t=1}^{d-1}(i_t-i_{t-1})_q.
\end{equation}
One way to approach the equation above is to write $n-z-d=\sum_{i=0}^{d-1} x_i$, $x_i \in \{0,\ldots,q-1\}$, and demand: 
$$x_0=(i_0-(z+d+i_{d-1})_q)_q  \text{ and } x_t=(i_t-i_{t-1})_q \text{ for } t=1,\ldots,d-1.$$
In other words, we ask for $x_t+1$ vertices of $\Gamma_t$ to be included in $\Gamma(\mc E)$. 
Note, that the choices of $e_0$ and a partition of $n-z-d$ define $i_t$, and hence $\mc E$, uniquely.  
Indeed, without loss of generality we may take $i_0=0$, then 
$i_t=(\sum_{i=1}^t x_i)_q$. This discussion can be summarised as follows.

\begin{proposition}\label{thm:TII}
Let $f_{\alpha}(t)=(t^q-\beta)^{d}-\alpha^{d}t^z$ be a reduced Ito polynomial of Type II, where  $n=q d$, $d \geq 2$ and $z \in \{0,\ldots, q-1\}$.

Then $A$ is a stochastic matrix with the characteristic polynomial $f_{\alpha}(t)$, and the minimal possible number of nonzero elements, if and only if   
 there exists a partition $n-z-d=\sum_{i=0}^{d-1} x_i$, $x_i \in \{0,\ldots,q-1\}$, so that
$A$ is, up to permutation similarity, equal to a matrix of the form: 
$$A=D\left(\oplus_{k=1}^d  C_q\right)+\alpha \sum_{t=0}^{d-1} E_{a_t,b_t},$$
where
\begin{align*}
a_0&=1, \, a_t=1+t q+ (\sum_{i=1}^t x_i)_q, \, t=1, \ldots,d-1  \\
b_0&=1+q, \, b_t=1+(t+1)q+ (\sum_{i=1}^t x_i)_q, \, t=1, \ldots,d-2, b_{d-1}=1+ (q-x_0)_q,  
\end{align*}
and $D$ is a diagonal matrix with $a_t$-th diagonal element equal to $1-\alpha$, $t=0,\ldots, d-1$, and all other elements equal to $1$. 
\end{proposition}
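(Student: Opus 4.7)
The plan is to leverage the structural setup developed in the paragraphs preceding the proposition. By Lemmas \ref{cor:DDTII}, \ref{lem:n=qd}, and \ref{prop:cyclesTII}, together with Proposition \ref{prop:cycle_structure}, any stochastic realization $A$ of $f_\alpha(t)$ has, after a permutation similarity, a weighted digraph $\Gamma$ consisting of $d$ vertex-disjoint $q$-cycles $\Gamma_0,\ldots,\Gamma_{d-1}$ each of total weight $\beta$, together with crossing edges in $\bigcup_{t=0}^{d-1}\mc E_{t,(t+1)_d}$, and the only cycles in $\Gamma$ other than the $\Gamma_t$ are of length $s=n-z$.

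First I would establish the forward direction. The number of nonzero entries of $A$ equals $n$ (from the $q$-cycle edges, which are always nonzero) plus the number of crossing edges. Since $f_\alpha(t)$ contains the nontrivial term $-\alpha^d t^z$, $\Gamma$ must contain at least one $(n-z)$-cycle; any such cycle traverses the $\Gamma_t$'s in cyclic order and must use at least one edge from each $\mc E_{t,(t+1)_d}$. Hence at least $d$ crossing edges are required, and the sparsest realizations achieve equality with exactly one crossing $(a_t,b_t)$ in each $\mc E_{t,(t+1)_d}$.

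With exactly $d$ crossings, the unique $(n-z)$-cycle weaves through the $\Gamma_t$'s once, and its length constraint \eqref{eq:length} becomes the partition identity $n-z-d=\sum_{t=0}^{d-1}x_t$ with $x_t\in\{0,\ldots,q-1\}$, where $x_t$ records the number of internal $\Gamma_t$-edges it uses. Fixing $a_0=1$ by a further permutation similarity within $\Gamma_0$ determines the remaining $a_t$ and $b_t$ via the stated formulas. The weights are then pinned: at each split vertex $a_t$ the $q$-cycle edge carries weight $\beta$ (so that the product around $\Gamma_t$ is $\beta$) and the crossing edge weight $\alpha$, while all other vertices carry weight $1$ on their $q$-cycle edge. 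This yields the stated form of $A$ and $D$.

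For the converse I would apply Theorem \ref{thm:coefficients} to any matrix of the stated form. The linear subdigraphs in the associated $\Gamma$ are precisely the subsets of the $d$ disjoint $q$-cycles, whose contributions sum to $(t^q-\beta)^d$, together with the single $(n-z)$-cycle alone, contributing $-\alpha^d t^z$; the latter cannot be combined with any $q$-cycle in a linear subdigraph because it visits every $\Gamma_t$. Thus the characteristic polynomial is exactly $f_\alpha(t)$, and the nonzero entry count $n+d=d(q+1)$ matches the lower bound from the forward direction. The main obstacle is the crossing-count argument at the start of the forward direction: one must establish, using that $(n-z)$ is the only admissible cycle length beyond the $\Gamma_t$, that every realization needs at least one crossing in each $\mc E_{t,(t+1)_d}$, so that the $d$-crossing lower bound is genuine and uniform across the cyclic pairs rather than being achievable by trading crossings between sets.
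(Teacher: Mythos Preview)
Your proposal is correct and follows essentially the same approach as the paper: the proposition is stated as a summary of the discussion immediately preceding it, and that discussion proceeds exactly as you outline---using Lemmas~\ref{cor:DDTII}, \ref{lem:n=qd}, \ref{prop:cyclesTII} and Proposition~\ref{prop:cycle_structure} to set up the $d$ disjoint $q$-cycles $\Gamma_t$ and the crossing sets $\mc E_{t,(t+1)_d}$, then deriving the length constraint \eqref{eq:length} and recasting it as the partition $n-z-d=\sum x_t$, with $i_0=0$ fixed by permutation similarity. Your treatment of the converse via Theorem~\ref{thm:coefficients} and your explicit justification that each $\mc E_{t,(t+1)_d}$ must be nonempty (because crossing edges only go $\Gamma_t\to\Gamma_{(t+1)_d}$, forcing any $(n-z)$-cycle to traverse all $d$ blocks) are slightly more explicit than what the paper records, but the argument is the same.
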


\begin{example}\label{ex:TIISingle}
Let us consider all the sparsest stochastic realisations of the polynomial $f_{\alpha}(t)=\left(x^4-(1-\alpha) \right)^3-\alpha ^3 x^3$ that are not equivalent under permutation similarity. Since $q=4$, $d=3$, and $z=3$, we need $x_i \in \{0,1,2,3\}$ with $x_0+x_1+x_2=6$. We have four relevant partitions of $6$, namely $6=0+3+3$,  $6=2+2+2$, $6=1+2+3$, and $6=1+3+2$. (Note that $6=1+2+3$ and $6=2+3+1$, say, result in permutationally equivalent realisations.) Below we list the realisations, together with their associated directed graphs. The choice $x_0=0$, $x_1=x_2=3$ results in: 
$$A_1=
\left(
\begin{array}{cccccccccccc}
 0 & 1-\alpha  & 0 & 0 & \alpha  & 0 & 0 & 0 & 0 & 0 & 0 & 0 \\
 0 & 0 & 1 & 0 & 0 & 0 & 0 & 0 & 0 & 0 & 0 & 0 \\
 0 & 0 & 0 & 1 & 0 & 0 & 0 & 0 & 0 & 0 & 0 & 0 \\
 1 & 0 & 0 & 0 & 0 & 0 & 0 & 0 & 0 & 0 & 0 & 0 \\
 0 & 0 & 0 & 0 & 0 & 1 & 0 & 0 & 0 & 0 & 0 & 0 \\
 0 & 0 & 0 & 0 & 0 & 0 & 1 & 0 & 0 & 0 & 0 & 0 \\
 0 & 0 & 0 & 0 & 0 & 0 & 0 & 1 & 0 & 0 & 0 & 0 \\
 0 & 0 & 0 & 0 & 1-\alpha  & 0 & 0 & 0 & 0 & 0 & 0 & \alpha  \\
 0 & 0 & 0 & 0 & 0 & 0 & 0 & 0 & 0 & 1 & 0 & 0 \\
 0 & 0 & 0 & 0 & 0 & 0 & 0 & 0 & 0 & 0 & 1 & 0 \\
 \alpha  & 0 & 0 & 0 & 0 & 0 & 0 & 0 & 0 & 0 & 0 & 1-\alpha  \\
 0 & 0 & 0 & 0 & 0 & 0 & 0 & 0 & 1 & 0 & 0 & 0 \\
\end{array}
\right)$$
\begin{center}
\includegraphics[height=5.5cm,keepaspectratio]{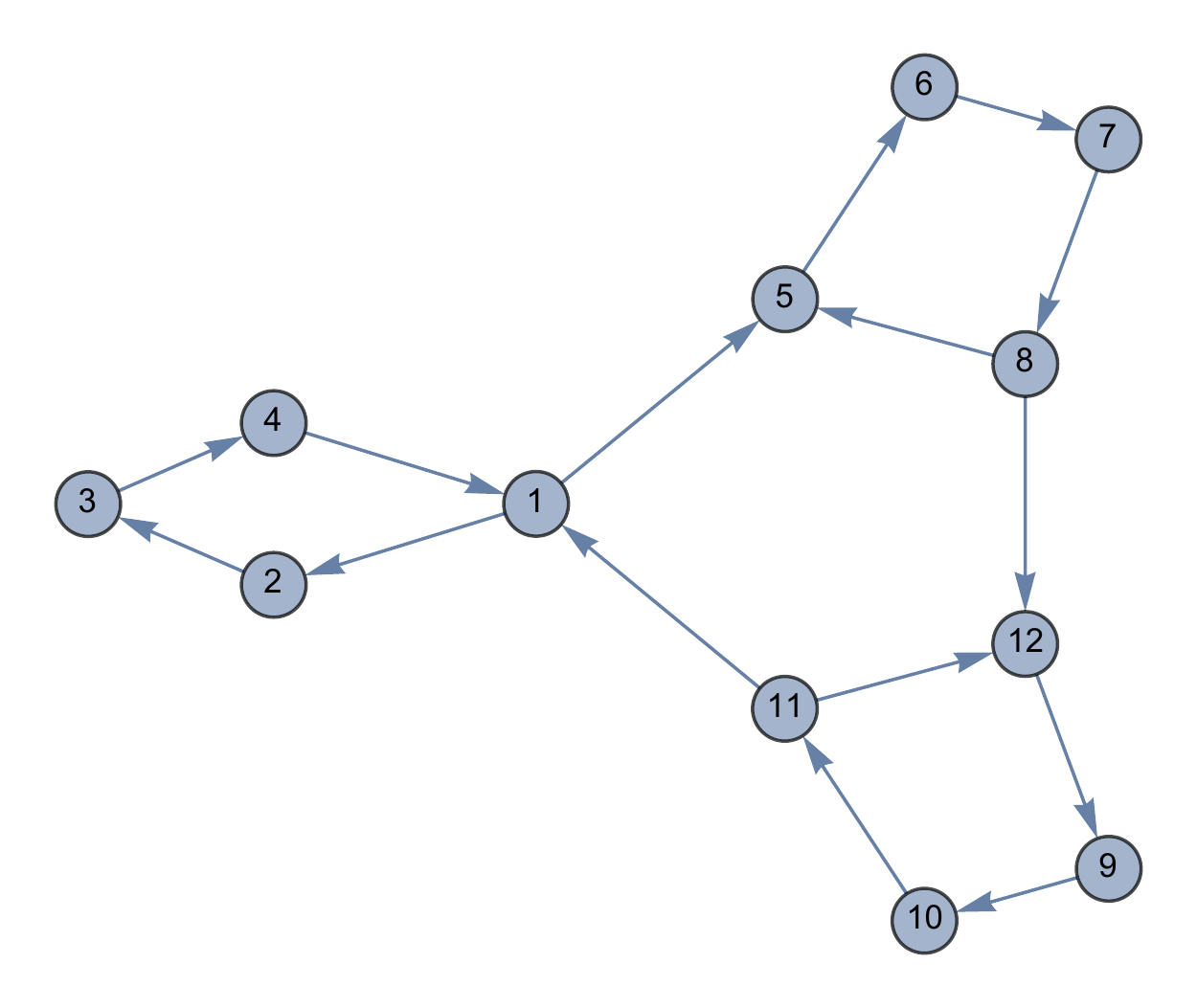}
\end{center}

 The second partition $x_0=x_1=x_2=2$ gives: 
 $$A_2=\left(
\begin{array}{cccccccccccc}
 0 & 1-\alpha  & 0 & 0 & \alpha  & 0 & 0 & 0 & 0 & 0 & 0 & 0 \\
 0 & 0 & 1 & 0 & 0 & 0 & 0 & 0 & 0 & 0 & 0 & 0 \\
 0 & 0 & 0 & 1 & 0 & 0 & 0 & 0 & 0 & 0 & 0 & 0 \\
 1 & 0 & 0 & 0 & 0 & 0 & 0 & 0 & 0 & 0 & 0 & 0 \\
 0 & 0 & 0 & 0 & 0 & 1 & 0 & 0 & 0 & 0 & 0 & 0 \\
 0 & 0 & 0 & 0 & 0 & 0 & 1 & 0 & 0 & 0 & 0 & 0 \\
 0 & 0 & 0 & 0 & 0 & 0 & 0 & 1-\alpha  & 0 & 0 & \alpha  & 0 \\
 0 & 0 & 0 & 0 & 1 & 0 & 0 & 0 & 0 & 0 & 0 & 0 \\
 0 & 0 & \alpha  & 0 & 0 & 0 & 0 & 0 & 0 & 1-\alpha  & 0 & 0 \\
 0 & 0 & 0 & 0 & 0 & 0 & 0 & 0 & 0 & 0 & 1 & 0 \\
 0 & 0 & 0 & 0 & 0 & 0 & 0 & 0 & 0 & 0 & 0 & 1 \\
 0 & 0 & 0 & 0 & 0 & 0 & 0 & 0 & 1 & 0 & 0 & 0 \\
\end{array}
\right),$$
with corresponding directed graph:
\begin{center}
\includegraphics[height=5.5cm,keepaspectratio]{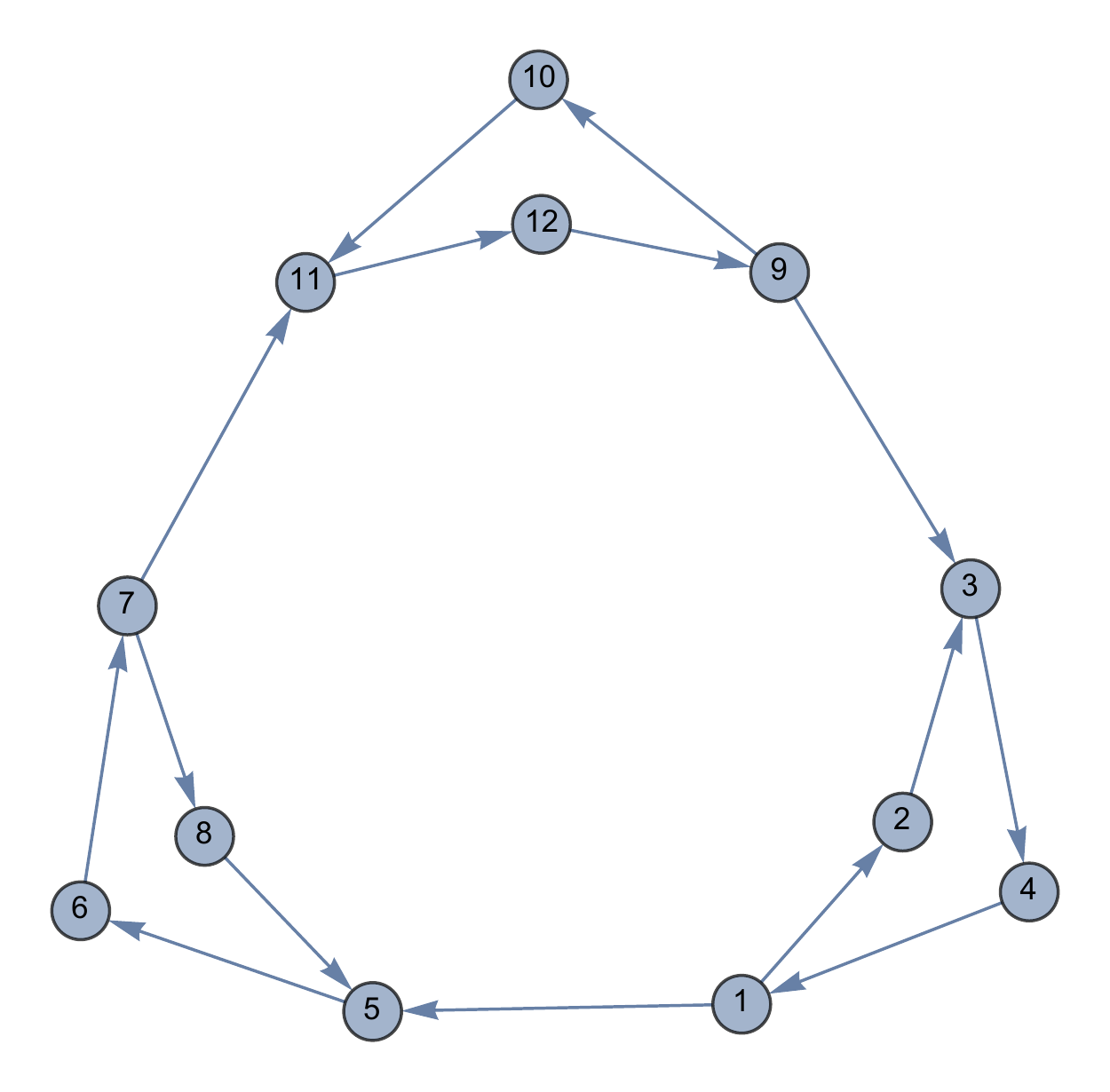}
\end{center}

The choice $x_0=1$, $x_1=2$, $x_2=3$, gives: 
$$A_3=\left(
\begin{array}{cccccccccccc}
 0 & 1-\alpha  & 0 & 0 & \alpha  & 0 & 0 & 0 & 0 & 0 & 0 & 0 \\
 0 & 0 & 1 & 0 & 0 & 0 & 0 & 0 & 0 & 0 & 0 & 0 \\
 0 & 0 & 0 & 1 & 0 & 0 & 0 & 0 & 0 & 0 & 0 & 0 \\
 1 & 0 & 0 & 0 & 0 & 0 & 0 & 0 & 0 & 0 & 0 & 0 \\
 0 & 0 & 0 & 0 & 0 & 1 & 0 & 0 & 0 & 0 & 0 & 0 \\
 0 & 0 & 0 & 0 & 0 & 0 & 1 & 0 & 0 & 0 & 0 & 0 \\
 0 & 0 & 0 & 0 & 0 & 0 & 0 & 1-\alpha  & 0 & 0 & \alpha  & 0 \\
 0 & 0 & 0 & 0 & 1 & 0 & 0 & 0 & 0 & 0 & 0 & 0 \\
 0 & 0 & 0 & 0 & 0 & 0 & 0 & 0 & 0 & 1 & 0 & 0 \\
 0 & 0 & 0 & \alpha  & 0 & 0 & 0 & 0 & 0 & 0 & 1-\alpha  & 0 \\
 0 & 0 & 0 & 0 & 0 & 0 & 0 & 0 & 0 & 0 & 0 & 1 \\
 0 & 0 & 0 & 0 & 0 & 0 & 0 & 0 & 1 & 0 & 0 & 0 \\
\end{array}
\right)$$
\begin{center}
\includegraphics[height=5.5cm,keepaspectratio]{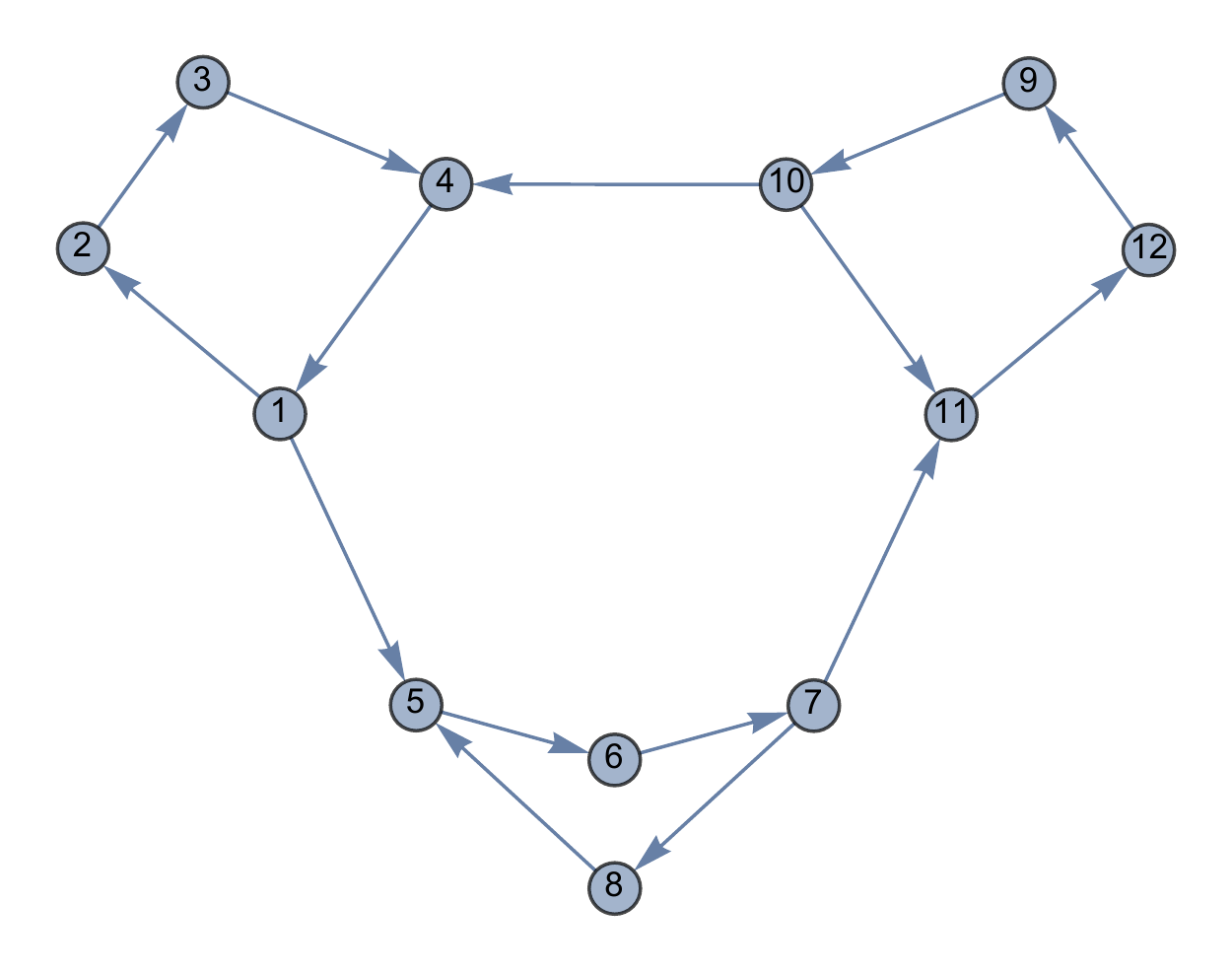}
\end{center}
Finally,  $x_0=1$, $x_1=3$, $x_2=2$, results in:
$$A_4=\left(
\begin{array}{cccccccccccc}
 0 & 1-\alpha  & 0 & 0 & \alpha  & 0 & 0 & 0 & 0 & 0 & 0 & 0 \\
 0 & 0 & 1 & 0 & 0 & 0 & 0 & 0 & 0 & 0 & 0 & 0 \\
 0 & 0 & 0 & 1 & 0 & 0 & 0 & 0 & 0 & 0 & 0 & 0 \\
 1 & 0 & 0 & 0 & 0 & 0 & 0 & 0 & 0 & 0 & 0 & 0 \\
 0 & 0 & 0 & 0 & 0 & 1 & 0 & 0 & 0 & 0 & 0 & 0 \\
 0 & 0 & 0 & 0 & 0 & 0 & 1 & 0 & 0 & 0 & 0 & 0 \\
 0 & 0 & 0 & 0 & 0 & 0 & 0 & 1 & 0 & 0 & 0 & 0 \\
 0 & 0 & 0 & 0 & 1-\alpha  & 0 & 0 & 0 & 0 & 0 & 0 & \alpha  \\
 0 & 0 & 0 & 0 & 0 & 0 & 0 & 0 & 0 & 1 & 0 & 0 \\
 0 & 0 & 0 & \alpha  & 0 & 0 & 0 & 0 & 0 & 0 & 1-\alpha  & 0 \\
 0 & 0 & 0 & 0 & 0 & 0 & 0 & 0 & 0 & 0 & 0 & 1 \\
 0 & 0 & 0 & 0 & 0 & 0 & 0 & 0 & 1 & 0 & 0 & 0 \\
\end{array}
\right)$$
\begin{center}
\includegraphics[height=5.5cm,keepaspectratio]{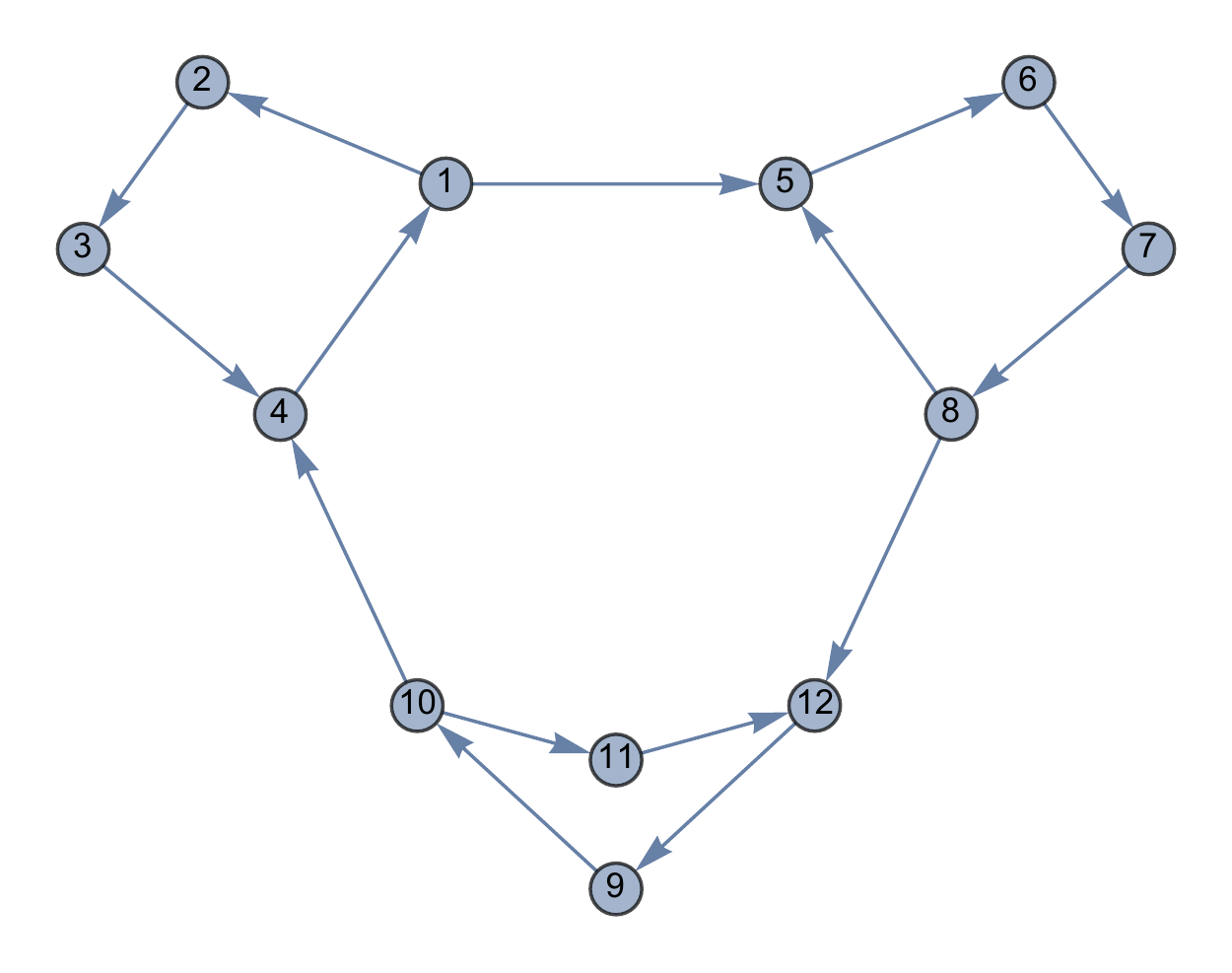}
\end{center}
\end{example}

Now that we understand all stochastic realisations of a Type II Ito polynomial where the associated directed graph $\Gamma$ has a single $(n-z)$-cycle, we need to consider the possibility that there might be more than one $(n-z)$-cycle in $\Gamma$. Indeed this can happen, as there can be be more than one edge between $\Gamma_t$ and $\Gamma_{(t+1)_d}$ in $\Gamma$.

Let us assume that $\Gamma_0$ is a directed graph containing a single $(n-z)$-cycle $\Gamma(\mc E)$ as described above. Let $\Gamma_1$ be obtained from $\Gamma_0$ by adding one edge: $e_{t}'=(a_t',b_t') \in \mc E_{t,(t+1)}$, $t \in \{0,\ldots,d-1\}$. 
The directed graph $\Gamma_1$ formed in this way has an additional cycle $\mc E'$ obtained from $\mc E$ by replacing $e_t$ by $e_{t}'$. We want $\Gamma(\mc E')$ again to be of order $n-z$. Note that in \eqref{eq:n-z} only two terms are affected, and we require: 
\begin{equation}\label{eq:2cycles}
(a_t-b_{t-1})_q+(a_{(t+1)_d}-b_{t})_q=(a_t'-b_{t-1})_q+(a_{(t+1)_d}-b_{t}')_q.
\end{equation}

We can generalise this argument by assuming that $\Gamma_k$ is a graph obtained from $\Gamma_0$ by adding $k$ edges in such a way that $\Gamma_k$ still satisfies the conditions of Lemma \ref{prop:cyclesTII}. As each added edge doubles the number of $(n-z)$-cycles,  $\Gamma_k$ has $2^k$ of them: $\Gamma(\mc E_j)$, $j=1,\ldots,2^k$.
We can add an additional edge $e_{t}'$ to $\Gamma_k$, if the equation analogous to the equation \eqref{eq:2cycles}, can be satisfied for each $\Gamma_k$, $k=1,\ldots,2^k$. 

\begin{example}
Let us investigate this process, by considering which edges can be added to a directed graph associated with $A_1$ in Example \ref{ex:TIISingle}, while preserving the characteristic polynomial of the associated matrix. No edges can be added between $\Gamma_1$ and $\Gamma_2$. 
Three edges can be added between $\Gamma_0$ and $\Gamma_1$, obtaining the realising matrix:
$$A_{11}= \left(
\begin{array}{cccccccccccc}
 0 & \alpha_1 & 0 & 0 & 1-\alpha_1 & 0 & 0 & 0 & 0 & 0 & 0 & 0 \\
 0 & 0 & \alpha_2 & 0 & 0 & 1-\alpha_2 & 0 & 0 & 0 & 0 & 0 & 0 \\
 0 & 0 & 0 & \alpha_3 & 0 & 0 & 1-\alpha_3 & 0 & 0 & 0 & 0 & 0 \\
 \frac{1-\alpha }{\alpha_1 \alpha_2 \alpha_3} & 0 & 0 & 0 & 0 & 0 & 0 &
   1-\frac{1-\alpha }{\alpha_1 \alpha_2 \alpha_3} & 0 & 0 & 0 & 0 \\
 0 & 0 & 0 & 0 & 0 & 1 & 0 & 0 & 0 & 0 & 0 & 0 \\
 0 & 0 & 0 & 0 & 0 & 0 & 1 & 0 & 0 & 0 & 0 & 0 \\
 0 & 0 & 0 & 0 & 0 & 0 & 0 & 1 & 0 & 0 & 0 & 0 \\
 0 & 0 & 0 & 0 & 1-\alpha  & 0 & 0 & 0 & 0 & 0 & 0 & \alpha  \\
 0 & 0 & 0 & 0 & 0 & 0 & 0 & 0 & 0 & 1 & 0 & 0 \\
 0 & 0 & 0 & 0 & 0 & 0 & 0 & 0 & 0 & 0 & 1 & 0 \\
 \alpha  & 0 & 0 & 0 & 0 & 0 & 0 & 0 & 0 & 0 & 0 & 1-\alpha  \\
 0 & 0 & 0 & 0 & 0 & 0 & 0 & 0 & 1 & 0 & 0 & 0 \\
\end{array}
\right).$$
Alternatively, three edges can be added between $\Gamma_2$ and $\Gamma_0$ with the resulting realising matrix: 
 $$A_{12}=\left(
\begin{array}{cccccccccccc}
 0 & 1-\alpha  & 0 & 0 & \alpha  & 0 & 0 & 0 & 0 & 0 & 0 & 0 \\
 0 & 0 & 1 & 0 & 0 & 0 & 0 & 0 & 0 & 0 & 0 & 0 \\
 0 & 0 & 0 & 1 & 0 & 0 & 0 & 0 & 0 & 0 & 0 & 0 \\
 1 & 0 & 0 & 0 & 0 & 0 & 0 & 0 & 0 & 0 & 0 & 0 \\
 0 & 0 & 0 & 0 & 0 & 1 & 0 & 0 & 0 & 0 & 0 & 0 \\
 0 & 0 & 0 & 0 & 0 & 0 & 1 & 0 & 0 & 0 & 0 & 0 \\
 0 & 0 & 0 & 0 & 0 & 0 & 0 & 1 & 0 & 0 & 0 & 0 \\
 0 & 0 & 0 & 0 & 1-\alpha  & 0 & 0 & 0 & 0 & 0 & 0 & \alpha  \\
 0 & 0 & 1-\alpha_1 & 0 & 0 & 0 & 0 & 0 & 0 & \alpha_1 & 0 & 0 \\
 0 & 0 & 0 & 1-\alpha_2 & 0 & 0 & 0 & 0 & 0 & 0 & \alpha_2 & 0 \\
 1-\alpha_3 & 0 & 0 & 0 & 0 & 0 & 0 & 0 & 0 & 0 & 0 & \alpha_3 \\
 0 & 1-\frac{1-\alpha }{\alpha_1 \alpha_2 \alpha_3} & 0 & 0 & 0 & 0 & 0 & 0 &
   \frac{1-\alpha }{\alpha_1 \alpha_2 \alpha_3} & 0 & 0 & 0 \\
\end{array}
\right).$$
It is also possible to add two edges between $\Gamma_0$ and $\Gamma_1$, and a further two edges between $\Gamma_2$ and $\Gamma_0$:
 $$A_{13}=\left(
\begin{array}{cccccccccccc}
 0 & \alpha_1 & 0 & 0 & 1-\alpha_1 & 0 & 0 & 0 & 0 & 0 & 0 & 0 \\
 0 & 0 & \frac{1-\alpha }{\alpha_1} & 0 & 0 & 1-\frac{1-\alpha }{\alpha_1} & 0 & 0 &
   0 & 0 & 0 & 0 \\
 0 & 0 & 0 & 1 & 0 & 0 & 0 & 0 & 0 & 0 & 0 & 0 \\
 1 & 0 & 0 & 0 & 0 & 0 & 0 & 0 & 0 & 0 & 0 & 0 \\
 0 & 0 & 0 & 0 & 0 & 1 & 0 & 0 & 0 & 0 & 0 & 0 \\
 0 & 0 & 0 & 0 & 0 & 0 & 1 & 0 & 0 & 0 & 0 & 0 \\
 0 & 0 & 0 & 0 & 0 & 0 & 0 & 1 & 0 & 0 & 0 & 0 \\
 0 & 0 & 0 & 0 & 1-\alpha  & 0 & 0 & 0 & 0 & 0 & 0 & \alpha  \\
 0 & 0 & 1-\alpha_1' & 0 & 0 & 0 & 0 & 0 & 0 & \alpha_1' & 0 & 0 \\
 0 & 0 & 0 & 1-\alpha_2' & 0 & 0 & 0 & 0 & 0 & 0 & \alpha_2' & 0 \\
 1-\frac{1-\alpha }{\alpha_1' \alpha_2'} & 0 & 0 & 0 & 0 & 0 & 0 & 0 & 0 & 0 & 0 &
   \frac{1-\alpha }{\alpha_1' \alpha_2'} \\
 0 & 0 & 0 & 0 & 0 & 0 & 0 & 0 & 1 & 0 & 0 & 0 \\
\end{array}
\right).$$
In all three examples above, the matrices have the characteristic polynomial  $f_{\alpha}(t)=\left(x^4-(1-\alpha) \right)^3-\alpha ^3 x^3$, and no further edges can be added to their digraphs.  Note that the weights are determined by the fact that the matrix is required to be stochastic, and the condition that the weight on each of the three $q$-cycles has to be equal to $1-\alpha$. Finally, the free parameters $\alpha_i$ and $\alpha_i'$ have to be chosen so that the resulting matrix is nonnegative, and as we see from the realisations above, the allowed range depends on $\alpha$. 
\end{example}

\section{Type III}\label{sec:type3}

In this section we consider stochastic realisations of reduced Ito polynomials of Type III,  $f_{\alpha}(t)=t^{y}(t^q-\beta)^{d}-\alpha^d$, where  $n=q d+y$, $d \geq 2$ and $y \in \{0,\ldots, q-1\}$. Throughout this section, let $A$ be a stochastic matrix with the characteristic polynomial $f_{\alpha}$, and let $\Gamma$ be the associated directed graph. Our first aim is to determine digraphs with minimal number of edges, i.e. the sparsest realisations of $f_{\alpha}$.  

\begin{theorem}\label{thm:TIII}
Let  $f_{\alpha}(t)=t^{y}(t^q-\beta)^{d}-\alpha^d$ be a reduced Ito polynomial of Type III, $n=q d+y$, $d \geq 2$ and $y \in \{0,\ldots, q-1\}$. Let $A$ be a stochastic matrix with the characteristic polynomial $f_{\alpha}(t)$ and the minimal possible number of nonzero elements.
Then there exists a partition  $y=\sum_{k=1}^d{y_k}=y$,  $y_k \in \{0,1,\ldots,q-1\}$, so that $A$
 is, up to permutation similarity, equal to a matrix of the form: 
$$A=DC_n+(I_n-D)C_n^{n+1-q},$$
where $D \in \R^{q \times q}$ is a diagonal matrix with elements in the $(k q+\sum_{i=1}^{k} y_i)_n$-th, $k=1,\ldots,d$, positions equal to $\alpha,$ and all other diagonal elements equal to $1$. 
\end{theorem}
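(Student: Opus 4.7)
\emph{Plan.} The approach combines the cycle-level restrictions of Proposition~\ref{prop:cycle_structure} and Corollary~\ref{cor:q_cycles} with a careful edge count. I will show that the digraph $\Gamma$ of any sparsest realisation has exactly $d$ pairwise vertex-disjoint $q$-cycles, a unique Hamiltonian cycle $H$, and exactly $n+d$ edges; relabelling along $H$ and then cyclically shifting so that one $q$-cycle ``tail'' sits at index $n$ produces the canonical form.

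\emph{Step 1 (cycle inventory).} Proposition~\ref{prop:cycle_structure} restricts every cycle of $\Gamma$ to a length in $\{q,2q,\ldots,dq,n\}$, and Corollary~\ref{cor:q_cycles} guarantees at least $d$ $q$-cycles in $\Gamma$. Because the constant term of $f_\alpha$ is $-\alpha^d\neq 0$, Theorem~\ref{thm:coefficients} applied to $k_n$ forces a linear digraph on all $n$ vertices in $\Gamma$; since $y\in\{1,\ldots,q-1\}$, no disjoint union of cycles of lengths in $\{q,2q,\ldots,dq\}$ totals $n$, so this linear digraph must be a single Hamiltonian cycle $H$.

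\emph{Step 2 (edge lower bound).} Theorem~\ref{thm:dd} implies each vertex of $\Gamma$ has at most two outgoing edges. Every $q$-cycle must contain at least one \emph{chord}, i.e.\ an edge off $H$, since otherwise it would be a sub-path of $H$. Label vertices so that $H=1\to 2\to\cdots\to n\to 1$. A chord from $v$ that closes into a $q$-cycle must land at $v-q+1\pmod n$, using the unique $H$-path back to $v$; more generally, a cycle built from $m$ chords and forward $H$-edges has length $mq\pmod n$. Consequently each $q$-cycle uses exactly one chord, each chord belongs to at most one $q$-cycle, and any Hamiltonian cycle distinct from $H$ would need at least $n/\gcd(n,q)>d$ chords. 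Together with the vertex-disjointness part of Corollary~\ref{cor:q_cycles}, this forces $|E(\Gamma)|\ge n+d$, with equality iff $\Gamma$ has exactly $d$ disjoint $q$-cycles, a unique Hamiltonian cycle, and exactly $d$ chords (each of step $-(q-1)$).

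\emph{Step 3 and canonical form.} In the sparsest case, each $q$-cycle consists of one chord together with $q-1$ edges of $H$. Stochasticity together with the weight-$\beta$ condition of Corollary~\ref{cor:q_cycles} force the chord to carry weight $\beta$, the non-chord tails on the $q$-cycle to be non-special (with $H$-edge weight $1$), and the chord tail (the \emph{special vertex} $v$) to carry $\alpha$ on $H$ and $\beta$ on the chord. Hence along $H$ each $q$-cycle occupies a block of $q$ consecutive indices ending at its special vertex. A further cyclic shift (a permutation similarity by a power of $C_n$) places one special vertex at index $n$; letting $y_k$ denote the number of gap vertices preceding the $k$th block produces integers $y_k\in\{0,\ldots,q-1\}$ with $\sum_{k=1}^{d} y_k = y$, and the special vertices at $v_k=kq+\sum_{i=1}^{k}y_i$. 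Since the chord from $v_k$ lands at $v_k-q+1 = 1+(v_k-q)_n$, which is precisely the action of $C_n^{n+1-q}$ in row $v_k$, we conclude $A=DC_n+(I_n-D)C_n^{n+1-q}$ with $D$ as stated. The crucial technical content is Step~2: ruling out multi-chord $q$-cycles, ruling out alternate Hamiltonian cycles, and controlling the count in the case where there might be more than $d$ $q$-cycles (so Corollary~\ref{cor:q_cycles} does not directly give disjointness). Once $|E(\Gamma)|=n+d$ is established, Step~3 is bookkeeping and the bounds $0\le y_k\le q-1$ follow immediately from $\sum_k y_k=y<q$.
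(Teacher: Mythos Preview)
Your overall strategy is the same as the paper's --- combine Proposition~\ref{prop:cycle_structure}, Corollary~\ref{cor:q_cycles}, and Theorem~\ref{thm:dd} to pin down the edge set, then count and normalise --- but Step~2 contains a real gap. You invoke Theorem~\ref{thm:dd} only for the bound ``out-degree at most two'', and then assert that a cycle using $m$ chords and forward $H$-edges has length $mq\pmod n$. That formula presupposes that \emph{every} chord has step $-(q-1)$, which you have not established. Out-degree two alone does not force this: a priori a chord from $v$ could land at $v-(2q-1)$ and close (with $H$-edges) into a $2q$-cycle, a length permitted by Proposition~\ref{prop:cycle_structure}. Once chord uniformity is lost, so is ``each $q$-cycle uses exactly one chord'', and with it the lower bound $|E(\Gamma)|\ge n+d$; your later remarks about Hamiltonian uniqueness and the partition of $y$ likewise rest on this unproved uniformity.

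The paper closes this gap by using Theorem~\ref{thm:dd} at full strength: analogously to Lemma~\ref{cor:DDTII}, it asserts directly that (up to isomorphism) $E(\Gamma)\subseteq\mc E_n\cup\hat{\mc E}_q$, so all non-$\mc E_n$ edges already have the fixed step $-(q-1)$, and then identifies the $n$-cycle with $\mc E_n$. In your ordering the fix is equally short: Theorem~\ref{thm:dd} says the two possible out-edges from $i$ go to $i+k$ and $i+k+1$; a Hamiltonian cycle using $a$ edges of the first kind and $b$ of the second has net displacement $ak+b(k+1)=nk+b\equiv b\pmod n$, so $b\in\{0,n\}$ and $H$ uses only one type. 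Relabelling along $H$ then makes every remaining edge a chord of one fixed step, and the existence of a $q$-cycle (Corollary~\ref{cor:q_cycles}) pins that step to $-(q-1)$. With this inserted, the rest of your Steps~2--3 goes through and matches the paper's argument (including the exhibition of a realisation on $n+d$ edges and the derivation of the partition $y=\sum_k y_k$).
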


\begin{proof}
Let $A$ a be stochastic matrix with the characteristic polynomial $f_{\alpha}(t)$ and the minimal possible number of nonzero elements, and let $\Gamma$ be the associated digraph. Let us define:
\begin{align*}
&\mc E_n:=\{(i,1+(i)_n); i=1,\ldots,n\}, \\ 
&\hat{\mc E}_q:=\{(i,1+(i-q)_n);i=1,\ldots,n\}.
\end{align*}
By  Theorem \ref{thm:dd}, $\Gamma$ is isomorphic to a graph $\Gamma_0$ satisfying $E(\Gamma_0) \subseteq  \mc E_n \cup  \hat{\mc E}_q$. Since  $\Gamma$ contains an $n$-cycle by Proposition \ref{prop:cycle_structure}, we also have $ \mc E_n \subseteq E(\Gamma_0)$.  We still need to determine the minimal number of edges from  $\hat{\mc E}_q$ that can be included in $\Gamma$. 

Note that each edge from $\hat{\mc E}_q$ together with selected edges from $\mc E_n$ forms a $q$-cycle in $\Gamma$.  
Since  any realisation has at least $d$ $q$-cycles by Corollary \ref{cor:q_cycles}, we know that at least $d$ edges from $\hat{\mc E}_q$  will be included in $\Gamma_0$. 
Observe that there is a realisation whose associated digraph is $\mc E_n \cup \{(iq, (i-1)q+1); i=1, \ldots, d   \}$; this fact can be established by applying Theorem \ref{thm:coefficients} to the realisation in which each $q$-cycle has weight $\alpha$.  
 Hence, any sparsest realisation must contain exactly $d$ $q$-cycles. By Corollary  \ref{cor:q_cycles} they have to be vertex disjoint and of equal weight. 

Edges $(i_k,1+(i_k-q)_n )$ and $(i_k,1+(i_k-q)_n )$ together with edges form $\mc E_n$ form two disjoint $q$-cycles iff $d_n(i_k,i_l) \geq q$.  We deduce that the digraph with precisely $d$ $q$-cycles must satisfy 
$$E(\Gamma_0)=\mc E_n \cup \{(i_k,1+(i_k-q)_n ); k=1,\ldots,d\},$$ where $d_n(i_k,i_l) \geq q$ whenever $k\neq l$. 
 Note that for any collection $i_k \in \{1,2,\ldots,n\}$ satisfying $d_n(i_k,i_l) \geq q$ for $k\neq l$, there exist $y_k \in \{0,1,\ldots,q-1\}$ with $\sum_{k=1}^d{y_k}=y$, so that $i_k=k q+y_k$.
 The weights on the edges are uniquely determined by the fact that $A$ is a stochastic matrix, and that all the $q$-cycles have weight $(1-\alpha)$. 
 
 It is straightforward to check that any matrix of this form has the characteristic polynomial $f_{\alpha}(t)$ by direct calculation. 
\end{proof}

Some examples of minimal realisations are given below. 
\begin{example}
Let $f_{\alpha}(t)=x^3 (x^4 - (1 - \alpha))^3 - \alpha^3$ be a Type III reduced Ito polynomial. By Theorem \ref{thm:TIII}, all sparsest realisations of $f_{\alpha}(t)$ are of the form $A=DC_{15}+(I_{15}-D)C_{15}^{12},$ where $D$ is a diagonal matrix with entries in positions $4+y_1$, $8+y_1+y_2$, $12+y_1+y_2+y_3$ equal to $\alpha$, and all other entries equal to $1$; here  we have $y=3=y_1+y_2+y_3$. There are four relevant partitions of $3$ into three parts: $3=0+0+3$, $3=0+1+2$, $3=0+2+1$ and $3=1+1+1$. 
The choice $y_1=0$, $y_2=0$, $y_3=3$ results in 
a stochastic matrix with the following weighted digraph: 
\begin{center}
\includegraphics[height=5.5cm,keepaspectratio]{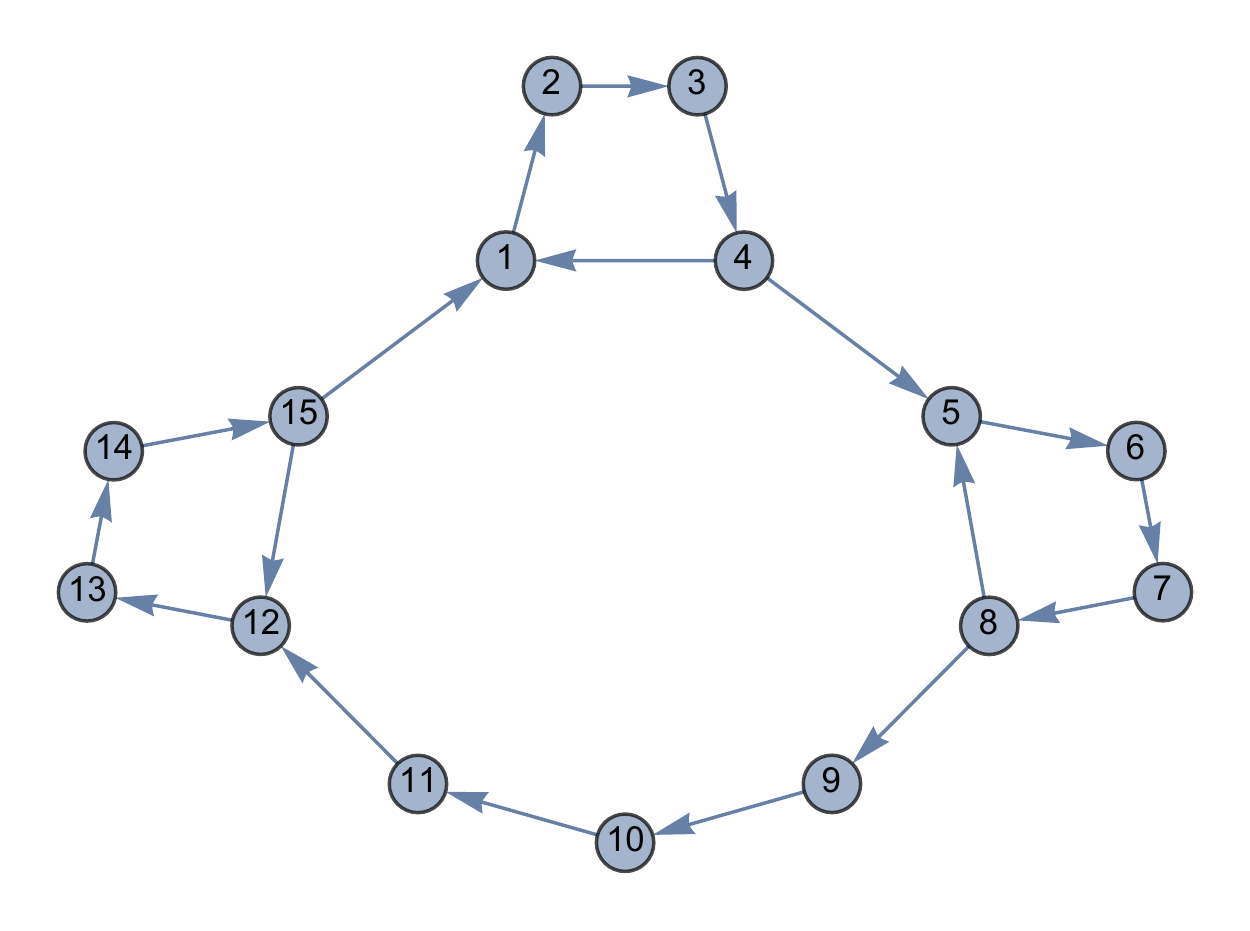}
\end{center}
The choice $y_1=0$, $y_2=1$, $y_3=2$, gives us a stochastic matrix 
whose weighted digraph is: 
\begin{center}
\includegraphics[height=5.5cm,keepaspectratio]{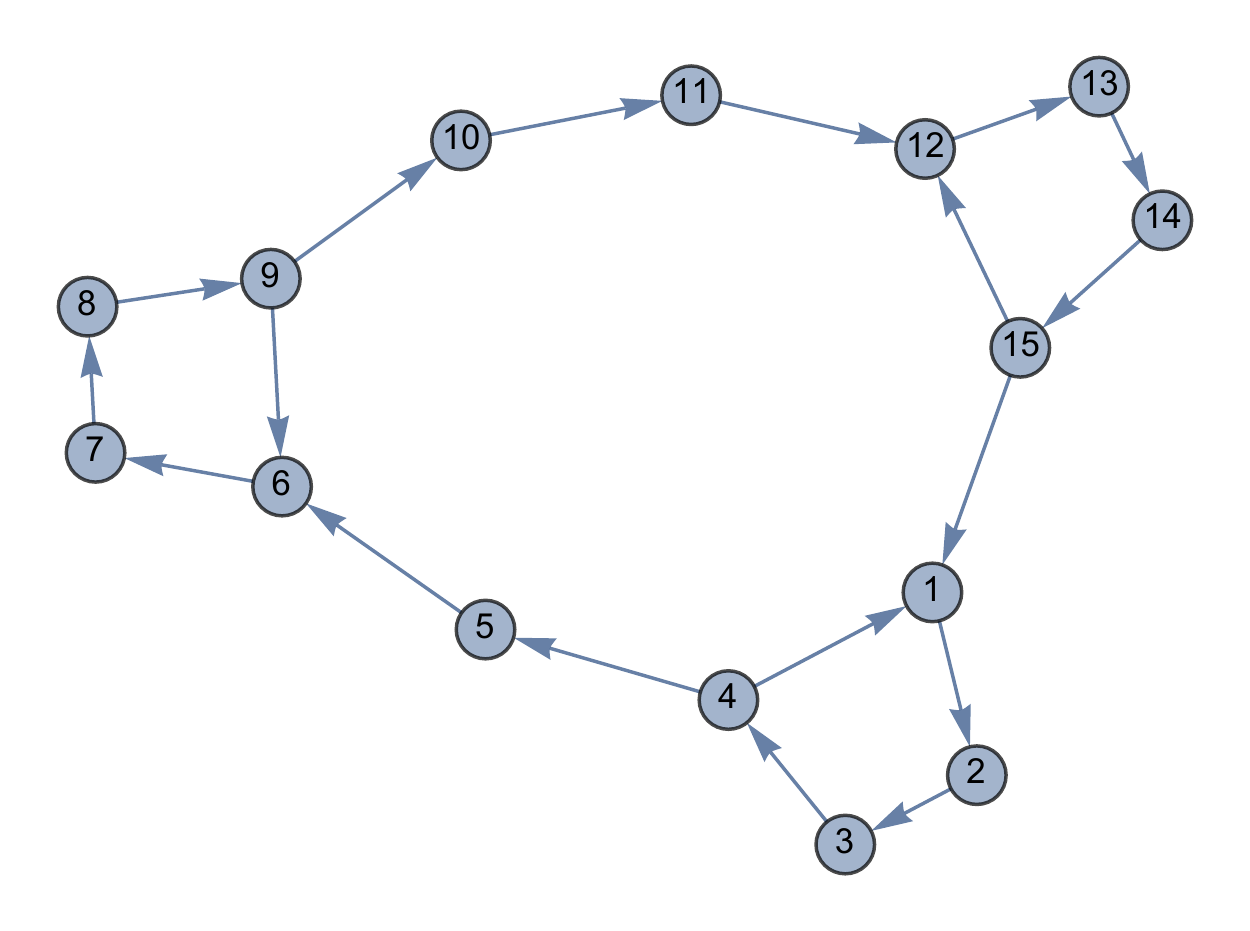}
\end{center}
The choice $y_1=0$, $y_2=2$, $y_3=1$ yields a stochastic matrix 
with weighted digraph:
\begin{center}
\includegraphics[height=5.5cm,keepaspectratio]{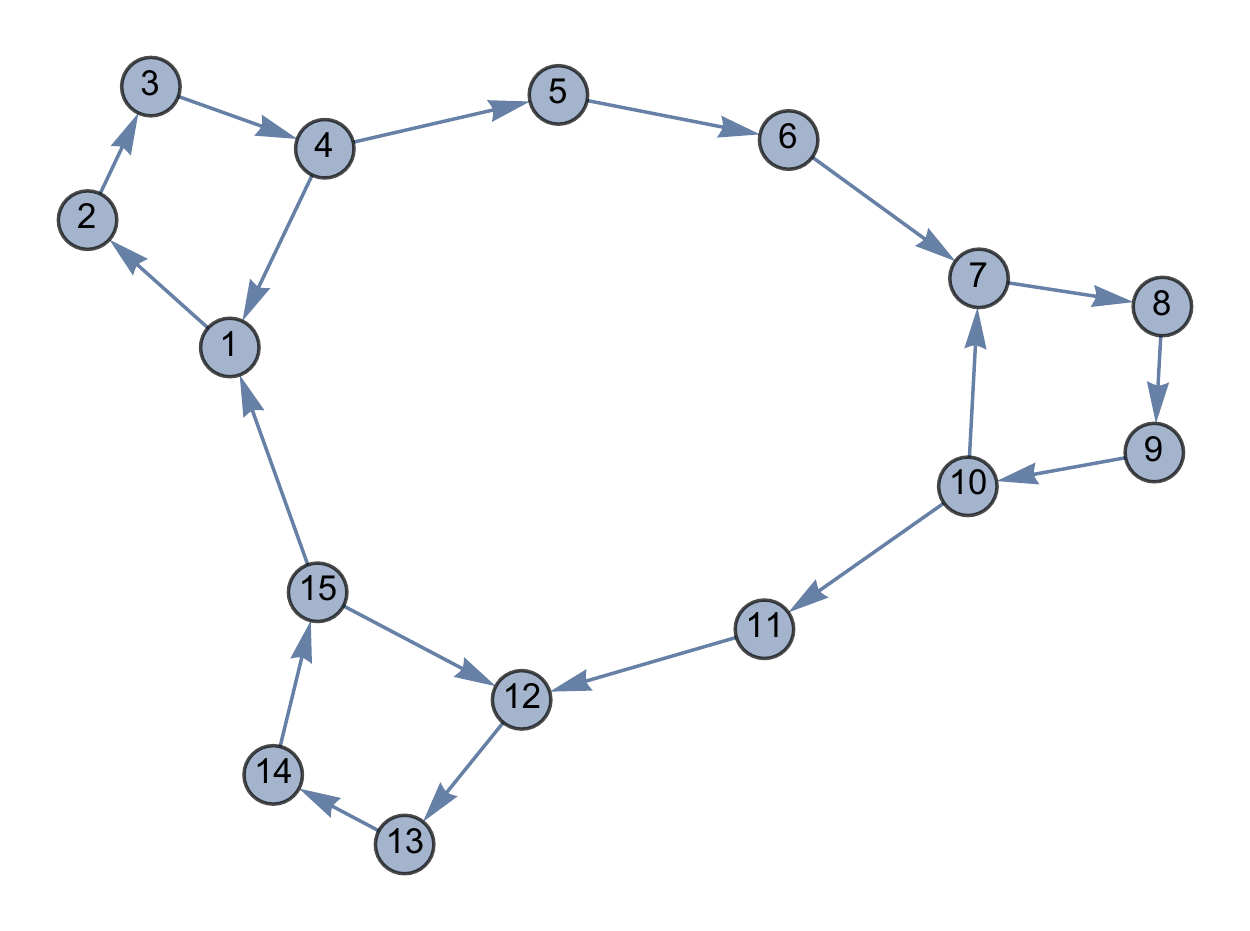}
\end{center}
Finally, the last partition yields $y_1=y_2=y_3=1.$ The directed graph of the  corresponding matrix realisation is: 
\begin{center}
\includegraphics[height=5.5cm,keepaspectratio]{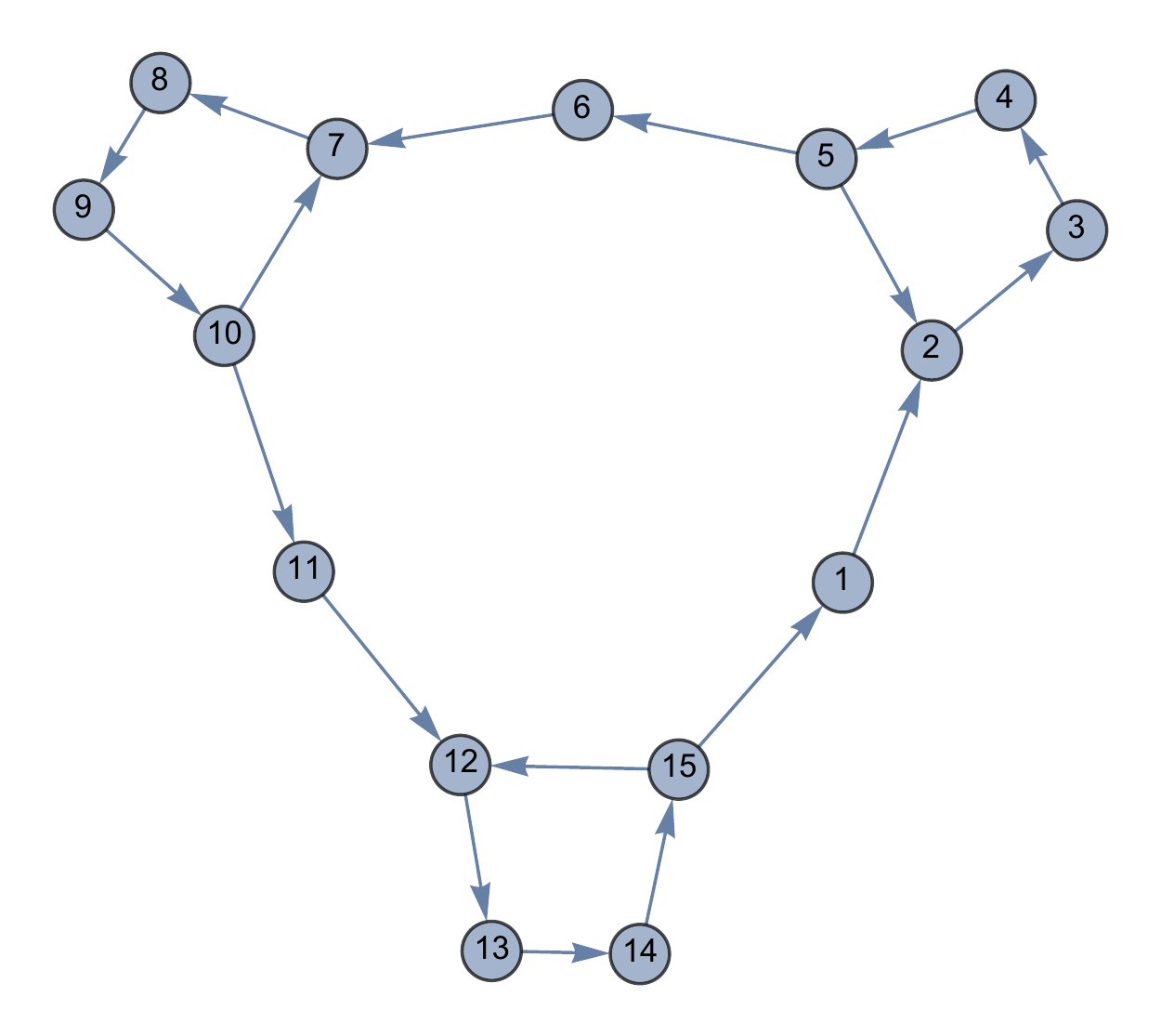}
\end{center}
\end{example}

The next proposition, assumes notation in the above proof, and presents a family of stochastic realisations of Type III polynomials.

\begin{proposition}\label{prop:type3} 
Let $n=qd+y$, where $y \in \{1,\ldots,q-1\}$. Let $V(\Gamma)=\{1,2,\ldots,n\}$, and $E(\Gamma)=\mc E_n \cup (\cup_{t=1}^d \mc B_t)$,where $\mc B_t \subseteq  \hat{\mc E}_q$ satisfy:
\begin{itemize}
\item if $(i,(i+1-q)_n),(j,(j+1-q)_n) \in \mc B_t$ then $d_n(i,j)<q$, and 
\item if $(i,(i+1-q)_n) \in \mc B_t$ and $(j,(j+1-q)_n) \in \mc B_s$, $t \neq s$, then $d_n(i,j)\geq q$.  
\end{itemize}
Let $A$ be a stochastic matrix, with the associated digraph $\Gamma$, and weights $a_{i(i+1)_n}=\alpha_i$ that satisfy:
\begin{equation}
\prod_{\{i, (i,(i+1-q)_n) \in \mc B_t\}} \alpha_i=\alpha.
\end{equation}

Then the characteristic polynomial of $A$ is equal to  $f_{\alpha}(t)=t^y(t^q-(1-\alpha))^d-\alpha^d$.  \end{proposition}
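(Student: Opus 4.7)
The plan is to compute the characteristic polynomial of $A$ via the Coates formula (Theorem~\ref{thm:coefficients}) and show it agrees with $f_\alpha(t)$. Set $B_t := \{i : (i, 1+(i-q)_n) \in \mc B_t\}$, and let $\alpha_i$ denote the forward-edge weight at vertex $i$; stochasticity forces $\alpha_j = 1$ for $j \notin \bigcup_t B_t$, and the back edge at $i \in B_t$ carries weight $1 - \alpha_i$. First I would catalogue the simple cycles of $\Gamma$. The obvious ones are the $n$-cycle $\mc E_n$ and, for each $i \in \bigcup_t B_t$, the $q$-cycle $C_i$ with vertex set $V(C_i) = \{1+(i-q)_n, \ldots, i\}$. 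The main obstacle is proving there are no others. The cluster hypotheses are crucial: together, within-cluster $d_n(i,j) < q$ and cross-cluster $d_n(i,j) \geq q$ force each $B_t$ to lie in a window of $q$ consecutive vertices and distinct clusters to be separated by at least $q$ vertices around the $n$-cycle. Using this, I would verify: (i) if two back edges in a hypothetical simple cycle come from the same cluster, their induced forward arcs share vertices; and (ii) if they come from different clusters, the combined lengths of the forward arcs already exceed $n$. A short extension of this case analysis rules out $k \geq 2$ back edges in any simple cycle.

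Next I would enumerate the linear digraphs. Since $\mc E_n$ uses every vertex, it appears only as the singleton $\{\mc E_n\}$ (size $n$). Two $q$-cycles $C_i$ and $C_j$ are vertex-disjoint precisely when $d_n(i,j) \geq q$, which by the cluster conditions is equivalent to $i$ and $j$ lying in different clusters. Hence every linear digraph of size $jq$ with $1 \leq j \leq d$ has the form $\{C_{i_t} : t \in S\}$ for some $S \subseteq \{1,\dots,d\}$ with $|S| = j$ and representatives $i_t \in B_t$. I would then compute
\[
\pi(\mc E_n) = \prod_{i=1}^n \alpha_i = \prod_{t=1}^d \prod_{i \in B_t} \alpha_i = \alpha^d.
\]
For $i \in B_t$, the cross-cluster separation gives $V(C_i) \cap B_s = \emptyset$ for $s \neq t$, so
\[
\pi(C_i) = (1 - \alpha_i) \prod_{j \in (V(C_i) \cap B_t) \setminus \{i\}} \alpha_j.
\]
Ordering $B_t = \{j_1, j_2, \dots, j_r\}$ along its window gives $V(C_{j_k}) \cap B_t = \{j_1, \dots, j_k\}$, and a telescoping sum yields
\[
W_t := \sum_{i \in B_t} \pi(C_i) = \sum_{k=1}^r (1 - \alpha_{j_k}) \prod_{\ell < k} \alpha_{j_\ell} = 1 - \prod_{\ell=1}^r \alpha_{j_\ell} = 1 - \alpha = \beta.
\]

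Finally, by Theorem~\ref{thm:coefficients}, $k_{jq} = (-1)^j e_j(W_1, \dots, W_d) = (-1)^j \binom{d}{j} \beta^j$ for $1 \leq j \leq d$, $k_n = -\pi(\mc E_n) = -\alpha^d$, and all remaining $k_i$ vanish. Assembling,
\[
\det(tI - A) = \sum_{j=0}^d \binom{d}{j} (-\beta)^j\, t^{y + (d-j)q} - \alpha^d = t^y(t^q - \beta)^d - \alpha^d = f_\alpha(t),
\]
as claimed. The hard part is the cycle-classification step: showing that the combinatorial interplay between the two cluster conditions genuinely rules out every multi-back-edge simple cycle, including those with $k \geq 3$ back edges whose sources wind around the $n$-cycle more than once; the remaining weight calculations, once the cycle catalogue is in hand, reduce to the telescoping identity above.
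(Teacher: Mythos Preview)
Your proposal is correct and follows essentially the same approach as the paper: both proofs use the Coates formula, identify the cycle structure as the single $n$-cycle together with the $q$-cycles $\Gamma_i$ (one per back edge), establish that $q$-cycles from distinct clusters are disjoint while those within a cluster overlap, and then compute $\sum_{i\in B_t}\pi(C_i)=1-\alpha$ via the same telescoping identity before assembling $k_{jq}$ as $(-1)^j e_j(\beta,\dots,\beta)$. Your write-up is actually more explicit than the paper's on the cycle-classification step (the paper simply asserts ``no other cycles''), but the underlying argument is the same.
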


 \begin{proof}
 Let $A$ be a stochastic matrix as defined in the statement. To determine the coefficients $k_i$ of the characteristic polynomial of $A$: $$\det(x I_n-A)=x^n+\sum_{i=1}^n k_i x^{n-i},$$
   we will use Theorem \ref{thm:coefficients}; hence we need to understand the cycles in $\Gamma$.  First we note that $\Gamma$ contains an $n$-cycle, $|\cup_{t=1}^d \mc B_t|$ $q$-cycles, and no other cycles. Indeed, each edge of the form $(i,(i+1-q)_n)$ lies on a single $q$-cycle, and we will denote this $q$-cycle by $\Gamma_i$. Further, let us denote $\mc I_t:=\{i; (i,(i+1-q)_n) \in \mc B_t\}$.
 
 Clearly, $k_i=0$, if $i\neq n$ and $i$ is not divisible by $q$. Further, $k_n=\prod_{i=1}^n \alpha_i=\prod_{t=1}^d (\prod_{i \in \mc I_t} \alpha_i)=\alpha^d.$ To determine $k_{s q}$, $s=1,\ldots, d$,  we note that $V(\Gamma_i) \cap V(\Gamma_j) \neq \emptyset$ if and only if $i,j \in \mc I_t$, and we claim that $\pi_t:=\sum_{i \in \mc I_t} \pi(\Gamma_i)=1-\alpha$. 
To show this, let us without loss of generality assume that $\mc I_t\subseteq \{1,\ldots,i_0\},$ where $1 \leq i_0\leq q$. Then 
\begin{align*}
\sum_{i \in \mc I_t} \pi(\Gamma_i)&=(1-\alpha_1)+\alpha_1(1-\alpha_2)+\alpha_1\alpha_2(1-\alpha_3)+\cdots+\alpha_1\ldots \alpha_{i_0-1}(1-\alpha_{i_0}) \\
&=1-\alpha_1\alpha_2\ldots \alpha_{i_0}=1-\prod_{i \in \mc I_t} \alpha_i=1-\alpha.
\end{align*}
Since $k_{sq}$ is the elementary symmetric polynomial in $\pi_t$, $t=1,\ldots,d$, of degree $s$, the statement follows. 
 \end{proof}

 \begin{example}
We offer two examples of realisations $A_i=D_iC_{15}+(I_{15}-D_i)C_{15}^{12}$, $i=1,2$, of $f_{\alpha}(t)=x^3 (x^4 - (1 - \alpha))^3 - \alpha^3$. Taking 
$$D_1=\left(
\begin{array}{ccccccccccccccc}
 1 & 0 & 0 & 0 & 0 & 0 & 0 & 0 & 0 & 0 & 0 & 0 & 0 & 0 & 0 \\
 0 & 1 & 0 & 0 & 0 & 0 & 0 & 0 & 0 & 0 & 0 & 0 & 0 & 0 & 0 \\
 0 & 0 & 1 & 0 & 0 & 0 & 0 & 0 & 0 & 0 & 0 & 0 & 0 & 0 & 0 \\
 0 & 0 & 0 & \alpha_1 & 0 & 0 & 0 & 0 & 0 & 0 & 0 & 0 & 0 & 0 & 0 \\
 0 & 0 & 0 & 0 & \frac{\alpha }{\alpha_1} & 0 & 0 & 0 & 0 & 0 & 0 & 0 & 0 & 0 & 0 \\
 0 & 0 & 0 & 0 & 0 & 1 & 0 & 0 & 0 & 0 & 0 & 0 & 0 & 0 & 0 \\
 0 & 0 & 0 & 0 & 0 & 0 & 1 & 0 & 0 & 0 & 0 & 0 & 0 & 0 & 0 \\
 0 & 0 & 0 & 0 & 0 & 0 & 0 & 1 & 0 & 0 & 0 & 0 & 0 & 0 & 0 \\
 0 & 0 & 0 & 0 & 0 & 0 & 0 & 0 & \alpha'_1 & 0 & 0 & 0 & 0 & 0 & 0 \\
 0 & 0 & 0 & 0 & 0 & 0 & 0 & 0 & 0 & \frac{\alpha }{\alpha'_1} & 0 & 0 & 0 & 0 & 0 \\
 0 & 0 & 0 & 0 & 0 & 0 & 0 & 0 & 0 & 0 & 1 & 0 & 0 & 0 & 0 \\
 0 & 0 & 0 & 0 & 0 & 0 & 0 & 0 & 0 & 0 & 0 & 1 & 0 & 0 & 0 \\
 0 & 0 & 0 & 0 & 0 & 0 & 0 & 0 & 0 & 0 & 0 & 0 & 1 & 0 & 0 \\
 0 & 0 & 0 & 0 & 0 & 0 & 0 & 0 & 0 & 0 & 0 & 0 & 0 & \alpha''_1 & 0 \\
 0 & 0 & 0 & 0 & 0 & 0 & 0 & 0 & 0 & 0 & 0 & 0 & 0 & 0 & \frac{\alpha }{\alpha''_1} \\
\end{array}
\right)$$ we produce $A_1$ with digraph
 \begin{center}
\includegraphics[height=5.5cm,keepaspectratio]{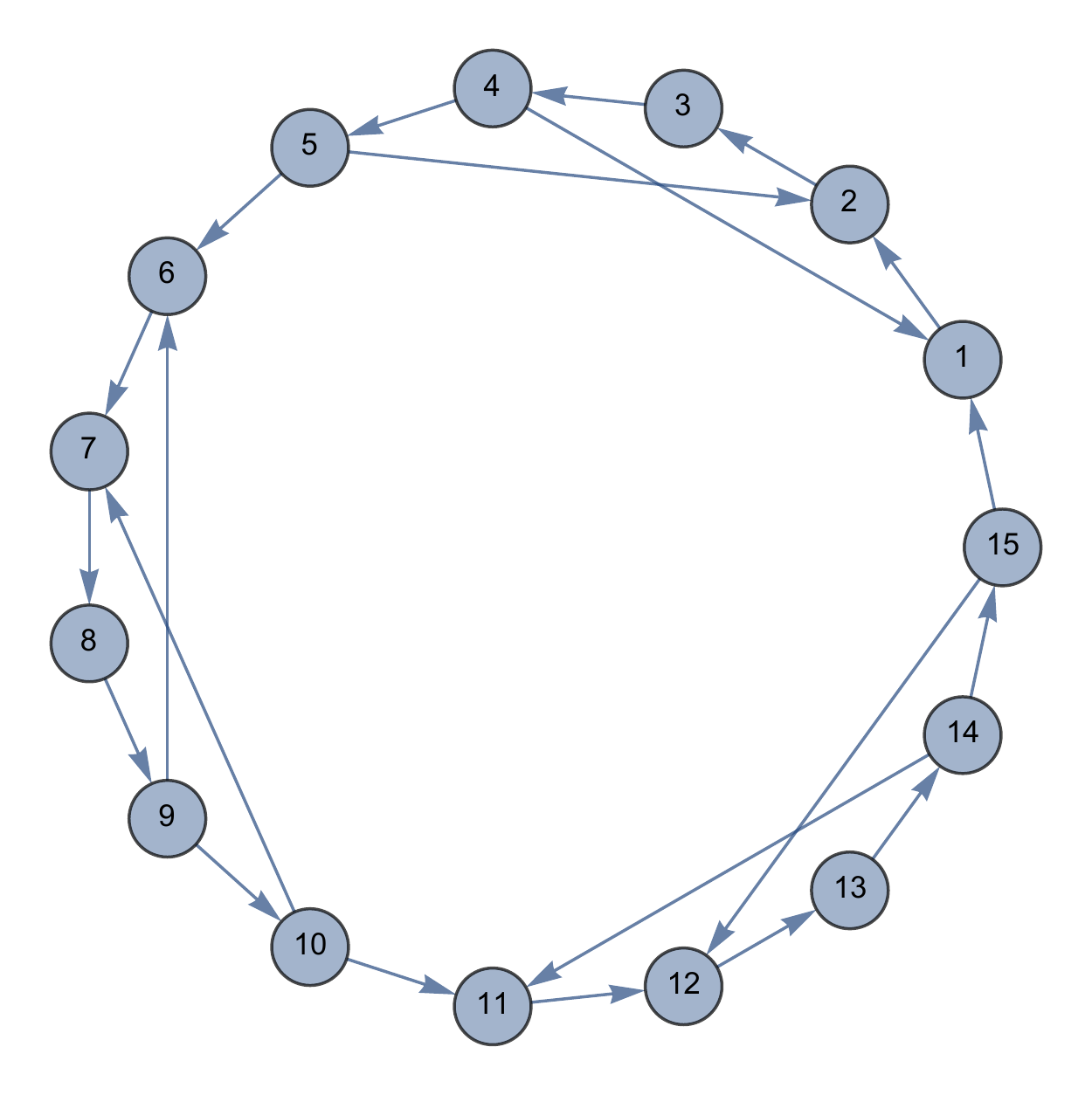}
\end{center}
Second, 
$$D_2=\left(
\begin{array}{ccccccccccccccc}
 1 & 0 & 0 & 0 & 0 & 0 & 0 & 0 & 0 & 0 & 0 & 0 & 0 & 0 & 0 \\
 0 & 1 & 0 & 0 & 0 & 0 & 0 & 0 & 0 & 0 & 0 & 0 & 0 & 0 & 0 \\
 0 & 0 & 1 & 0 & 0 & 0 & 0 & 0 & 0 & 0 & 0 & 0 & 0 & 0 & 0 \\
 0 & 0 & 0 & \alpha_1 & 0 & 0 & 0 & 0 & 0 & 0 & 0 & 0 & 0 & 0 & 0 \\
 0 & 0 & 0 & 0 & \alpha_2 & 0 & 0 & 0 & 0 & 0 & 0 & 0 & 0 & 0 & 0 \\
 0 & 0 & 0 & 0 & 0 & \alpha_3 & 0 & 0 & 0 & 0 & 0 & 0 & 0 & 0 & 0 \\
 0 & 0 & 0 & 0 & 0 & 0 & \frac{\alpha }{\alpha_1 \alpha_2 \alpha_3} & 0 & 0 & 0 &
   0 & 0 & 0 & 0 & 0 \\
 0 & 0 & 0 & 0 & 0 & 0 & 0 & 1 & 0 & 0 & 0 & 0 & 0 & 0 & 0 \\
 0 & 0 & 0 & 0 & 0 & 0 & 0 & 0 & 1 & 0 & 0 & 0 & 0 & 0 & 0 \\
 0 & 0 & 0 & 0 & 0 & 0 & 0 & 0 & 0 & 1 & 0 & 0 & 0 & 0 & 0 \\
 0 & 0 & 0 & 0 & 0 & 0 & 0 & 0 & 0 & 0 & \alpha  & 0 & 0 & 0 & 0 \\
 0 & 0 & 0 & 0 & 0 & 0 & 0 & 0 & 0 & 0 & 0 & 1 & 0 & 0 & 0 \\
 0 & 0 & 0 & 0 & 0 & 0 & 0 & 0 & 0 & 0 & 0 & 0 & 1 & 0 & 0 \\
 0 & 0 & 0 & 0 & 0 & 0 & 0 & 0 & 0 & 0 & 0 & 0 & 0 & 1 & 0 \\
 0 & 0 & 0 & 0 & 0 & 0 & 0 & 0 & 0 & 0 & 0 & 0 & 0 & 0 & \alpha  \\
\end{array}
\right)$$ produces the matrix $A_2$ 
with digraph 
 \begin{center}
\includegraphics[height=5.5cm,keepaspectratio]{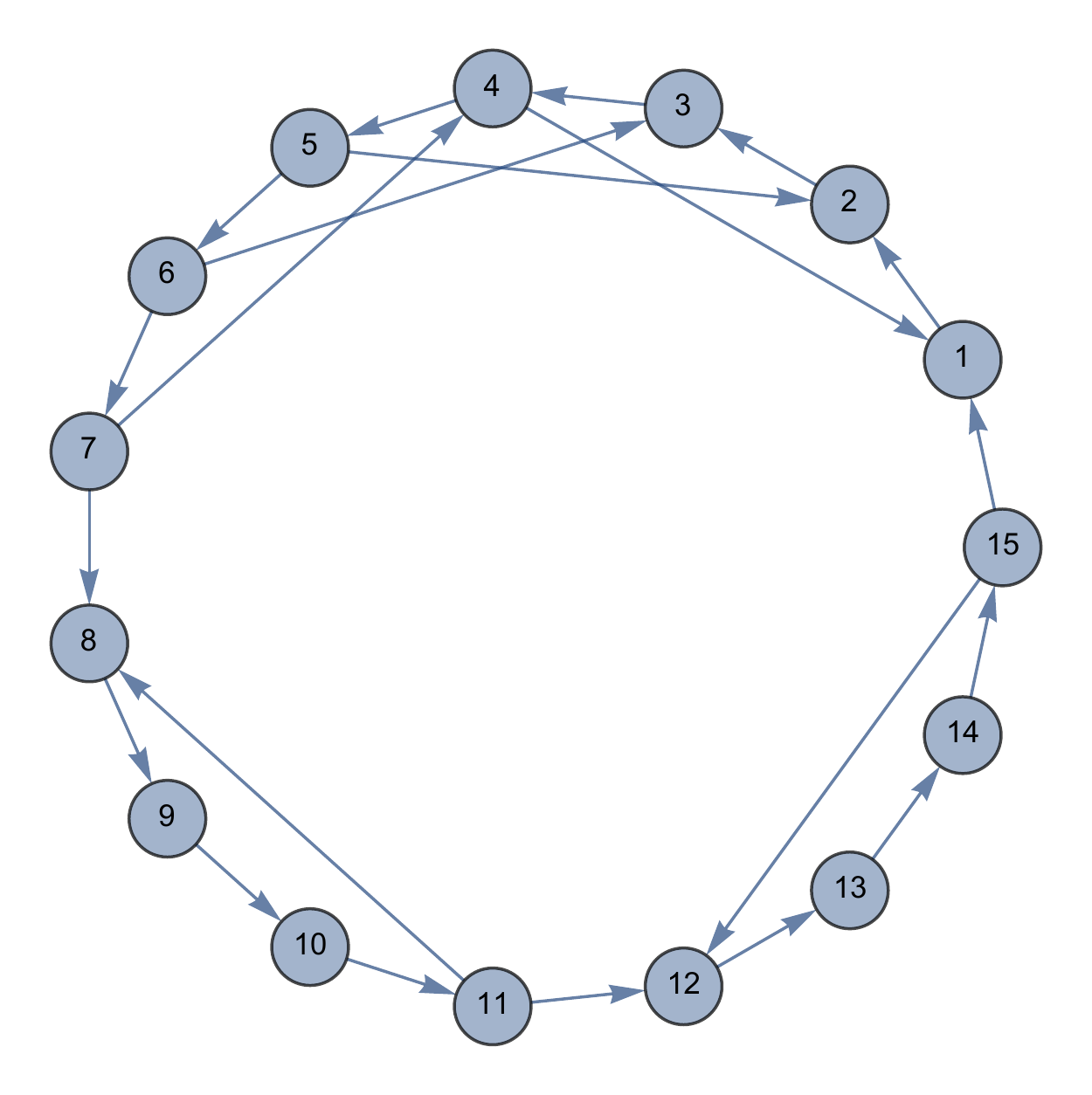}
\end{center}
The range of parameters $\alpha_i$, $\alpha'_i$, $\alpha''_i$, $\alpha'''_i$ is in both cases restricted by $\alpha$. Specifically, for $D_1$ we have $\alpha < \alpha_1,  \alpha_1', \alpha_1'' <1,$ while for $D_2$ we have $0< \alpha_1, \alpha_2, \alpha_3 <1$ and $\alpha< \alpha_1, \alpha_2, \alpha_3.$ 
 \end{example}

 We close the paper with the following. 
 
 \begin{conjecture} 
 	Let $A$ be a stochastic matrix of order $n$ whose characteristic polynomial is Type III: $f_{\alpha}(t)=t^y(t^q-(1-\alpha))^d-\alpha^d,$ where $n=qd+y, 1 \le y \le q-1$ and $d \ge 2.$ Then $A$ is permutationally similar to a matrix whose  directed graph $\Gamma $ satisfies the hypothesis of Proposition \ref{prop:type3}.  
\end{conjecture}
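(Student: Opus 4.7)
The plan is to start by applying the structural results from earlier in the paper to reduce $A$ to a canonical form. By Theorem \ref{thm:dd} applied with $k$ determined by the argument of a boundary eigenvalue of $A$, together with Proposition \ref{prop:cycle_structure} (which guarantees the presence of the $n$-cycle), after permutation similarity we have $\mc E_n \subseteq E(\Gamma) \subseteq \mc E_n \cup \hat{\mc E}_q$, using the notation $\hat{\mc E}_q = \{(i, 1+(i-q)_n) : i = 1,\dots,n\}$ from the proof of Theorem \ref{thm:TIII}. Thus $\Gamma$ is the main $n$-cycle together with a set $\mc B \subseteq \hat{\mc E}_q$ of backward shortcuts. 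I would next show that the only cycles in $\Gamma$ are the $n$-cycle and the $q$-cycles $\Gamma_i$ indexed by $i \in \mc I := \{i : (i, 1+(i-q)_n) \in \mc B\}$: a closed walk using $k \ge 2$ shortcuts splits into $k$ forward arcs on the main cycle whose total length is $kq$, and a direct endpoint computation shows that consecutive arcs must share a block of $q$ vertices, ruling out simplicity.

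Let $w_i$ denote the weight of $\Gamma_i$. Applying Theorem \ref{thm:coefficients} to the coefficient of $t^{n-kq}$ in $f_\alpha(t)$ for $1 \le k \le d$, and using that $\mc L_{kq}$ consists only of disjoint unions of $k$ of the $\Gamma_i$, yields the polynomial identity
\begin{equation*}
\sum_{k \ge 0} e_k^*(H,w)\, x^k \;=\; (1+\beta x)^d,
\end{equation*}
where $H$ is the overlap graph on $\mc I$ (with $i \sim j$ iff $d_n(i,j)<q$) and $e_k^*(H,w)$ is the sum of $\prod_{i \in S}w_i$ over independent sets $S \subseteq \mc I$ of size $k$. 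Equivalently, $e_k^*(H,w) = \binom{d}{k}\beta^k$ for each $k=1,\dots,d$, while $e_{d+1}^*(H,w)=0$ is automatic since $(d+1)q>n$.

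To extract the cluster structure demanded by Proposition \ref{prop:type3}, I would analyse maximum independent sets of $H$. The identity $e_d^*(H,w)=\beta^d>0$ furnishes an independent set $\mc J^*=\{j_1^*,\dots,j_d^*\} \subseteq \mc I$ of size $d$; the corresponding $d$ disjoint $q$-arcs cover $dq$ vertices, leaving only $y<q$ vertices distributed in cyclic gaps between consecutive arcs. By maximality, every $i \in \mc I \setminus \mc J^*$ overlaps at least one $j_a^*$, and the plan would be to argue that it overlaps a \emph{unique} such $j_a^*$. Defining $\mc B_a$ to consist of all shortcut edges whose tail overlaps $j_a^*$ then produces the required partition of $\mc B$. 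Its within-cluster tails fit in a common window of length $q$, so the telescoping computation from the proof of Proposition \ref{prop:type3} gives cluster weight $W_a = \sum_{i \in \mc I_a}\pi(\Gamma_i) = 1-\prod_{i \in \mc I_a}\alpha_i$; combined with the polynomial identity above, which factors as $\prod_{a=1}^d (1+W_a x) = (1+\beta x)^d$ and thus forces $W_a = \beta$ for every $a$, this produces the weight equation $\prod_{i \in \mc I_a}\alpha_i = \alpha$ required by Proposition \ref{prop:type3}.

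The main obstacle, and the reason the statement is only a conjecture, is the uniqueness step above: excluding ``bridging'' shortcuts $i$ whose arc $C_i = \{i-q+1,\dots,i\}$ overlaps two distinct $j_a^*$ and $j_b^*$. Such configurations are not forbidden by the gap bound $y<q$ alone --- the parameters $(n,q,d,y)=(9,4,2,1)$ admit three arcs that are pairwise overlapping without any common window of length $q$ containing them --- and so their exclusion must invoke the full strength of the identities $e_k^*(H,w) = \binom{d}{k}\beta^k$ for intermediate $k$, not merely the extremes $k=1$ and $k=d$. The cleanest route appears to be an induction on $d$: peel off a single cluster $\mc B_1$ adjacent to a minimum-gap pair of consecutive members of $\mc J^*$, verify that the residual configuration realises a Type~III polynomial of parameter $d-1$ on the appropriately reduced vertex set, and invoke the inductive hypothesis. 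Tracking the effect of the peeling on the coefficients carefully, and certifying that no bridging edge survives, is the delicate point in the argument.
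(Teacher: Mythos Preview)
The paper does not prove this statement: it is explicitly labelled a \emph{conjecture} and closes the paper without argument. There is therefore no ``paper's proof'' to compare against; your proposal is an outline of a possible attack on an open problem, and you correctly flag it as incomplete.

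That said, a few comments on the outline itself. Your reduction via Theorem~\ref{thm:dd} and Proposition~\ref{prop:cycle_structure} to the form $\mc E_n \subseteq E(\Gamma) \subseteq \mc E_n \cup \hat{\mc E}_q$ is exactly the move the paper makes in the proof of Theorem~\ref{thm:TIII}, so that part is on solid ground. Your claim that the only cycles are the $n$-cycle and the individual $q$-cycles $\Gamma_i$ is plausible and worth writing out carefully: the key point is that after a shortcut $i \to i-q+1$, any forward run to another shortcut origin $j$ that avoids $i$ forces $j \in \{i-q+1,\dots,i-1\}$, whence the next landing point $j-q+1$ lies strictly behind $i-q+1$ and the return to $i$ must re-enter already-visited territory. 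This deserves a clean lemma rather than a one-line ``direct endpoint computation''.

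Your identification of the obstacle is accurate, but your illustrative example is slightly misleading. In the $(n,q,d,y)=(9,4,2,1)$ configuration with three pairwise-overlapping $q$-cycles and no independent pair, one has $e_2^*(H,w)=0$, which already contradicts $e_2^*=\beta^2>0$; so the weight identities \emph{do} kill this particular case immediately. The genuine difficulty is for larger $d$, where a single bridging shortcut can coexist with many independent pairs and the bookkeeping for intermediate $k$ becomes intricate. Your proposed induction on $d$---peel off one cluster and reduce to parameter $d-1$---is a natural strategy, but note that removing a cluster changes $n$ and $y$ as well as $d$, and you must check that the residual graph and residual coefficient identities still correspond to a bona fide Type~III instance (in particular that the new $y$ stays in $\{1,\dots,q-1\}$). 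That verification, together with the exclusion of bridging edges at each peel, is where the real work lies, and the paper offers no guidance beyond posing the question.
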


 \noindent 
 {\bf{Acknowledgements}}: The authors' work was supported by University College Dublin under Grant SF1588. S.K.'s research is supported in part by NSERC Discovery Grant RGPIN--2019--05408. Both authors are grateful to Tom Laffey for helpful conversations   during the early phase of this research.

\end{document}